\newtheorem{thm}{Theorem}[section]
\newtheorem{corollary}[thm]{Corollary}
\newtheorem{lemma}[thm]{Lemma}
\newtheorem{proposition}[thm]{Proposition}
\newtheorem{prop}[thm]{Proposition}
\newtheorem{thm-dfn}[thm]{Theorem-Definition}
\theoremstyle{definition}
\newtheorem{definition}[thm]{Definition}
\newtheorem{remark}[thm]{Remark}
\newtheorem{example}[thm]{Example}
\numberwithin{equation}{section}
\newcommand{\fg}{{\mathfrak g}}
\newcommand{\ft}{{\mathfrak t}}
\newcommand{\fp}{{\mathfrak p}}
\newcommand{\fa}{{\mathfrak a}}
\newcommand{\fc}{{\mathfrak c}}
\newcommand{\bC}{{\mathbb C}}
\newcommand{\bP}{{\mathbb P}}
\newcommand{\bZ}{{\mathbb Z}}
\newcommand{\mE}{\mathcal{E}}
\newcommand{\mO}{\mathcal{O}}
\newcommand{\mL}{\mathcal{L}}
\newcommand{\mG}{\mathcal{G}}
\newcommand{\on}{\operatorname}
\newcommand{\lra}{\longrightarrow}
\newcommand{\ra}{\rightarrow}
\newcommand{\bs}{\backslash}
\newcommand{\is}{\simeq}
\newcommand{\Loc}{\on{LocSys}}
\newcommand{\Bun}{\on{Bun}}
\newcommand{\quash}[1]{}  
\newcommand{\nc}{\newcommand}
\newcommand{\fraka}{{\mathfrak a}}
\newcommand{\frakb}{{\mathfrak b}}
\newcommand{\frakh}{{\mathfrak h}}
\newcommand{\frakp}{{\mathfrak p}}
\newcommand{\frakt}{{\mathfrak t}}
\newcommand{\fraku}{{\mathfrak u}}
\newcommand{\bbA}{{\mathbb A}}
\newcommand{\bbC}{{\mathbb C}}
\newcommand{\bbD}{{\mathbb D}}
\newcommand{\bbG}{{\mathbb G}}
\newcommand{\bbH}{{\mathbb H}}
\newcommand{\bbP}{{\mathbb P}}
\newcommand{\bbR}{{\mathbb R}}
\newcommand{\bbS}{{\mathbb S}}
\newcommand{\bbZ}{{\mathbb Z}}
\newcommand{\calE}{{\mathcal E}}
\newcommand{\calF}{{\mathcal F}}
\newcommand{\calG}{{\mathcal G}}
\newcommand{\calK}{{\mathcal K}}
\newcommand{\calL}{{\mathcal L}}
\newcommand{\calO}{{\mathcal O}}
\newcommand{\calS}{{\mathcal S}}
\newcommand{\calU}{{\mathcal U}}
\newcommand{\calX}{{\mathcal X}}
\newcommand{\calZ}{{\mathcal Z}}
\nc{\al}{{\alpha}} \nc{\be}{{\beta}} \nc{\ga}{{\gamma}}
\nc{\ve}{{\varepsilon}} \nc{\Ga}{{\Gamma}} 
\nc{\La}{{\Lambda}}
\nc{\ad }{{\on{ad }}}
\nc{\aff}{{\on{aff}}} \nc{\Aff}{{\mathbf{Aff}}}
\nc{\der}{{\on{der}}}
\nc{\diag}{{\on{diag}}}
\nc{\Fl}{{\calF\ell}}
\newcommand{\Gal}{{\on{Gal}}}
\newcommand{\Gr}{{\on{Gr}}}
\nc{\Hg}{{\on{Higgs}}}
\newcommand{\Hom}{{\on{Hom}}}
\nc{\Id}{{\on{Id}}}
\nc{\Ind}{{\on{Ind}}}
\nc{\Op}{{\on{Op}}}
\newcommand{\pr}{{\on{pr}}}
\nc{\res}{{\on{res}}}
\newcommand{\Spec}{{\on{Spec}}}
\nc{\tr}{{\on{tr}}}
\newcommand{\GL}{{\on{GL}}}
\nc{\GSp}{{\on{GSp}}} \nc{\GU}{{\on{GU}}} \nc{\SL}{{\on{SL}}}
\nc{\SU}{{\on{SU}}} \nc{\SO}{{\on{SO}}}
\nc{\nh}{{\Loc_{J^p}(\tau')}}
\nc{\bnh}{{\Loc_{\breve J^p}(\tau')}}
\nc{\bU}{{\overline{U}}} \nc{\IC}{{\on{IC}}}
\newcommand{\beqn}{\begin{equation*}}
\newcommand{\eeqn}{\end{equation*}}
\newcommand{\beq}{\begin{equation}}
\newcommand{\eeq}{\end{equation}}
\nc{\QM}{QM}
\nc{\eval}{\textup{ev}}
\newcommand{\gitquot}{/\hspace{-0.2em}/}
\begin{document}
\title{Real and symmetric quasi-maps}

    \author{Tsao-Hsien Chen} 
        
       \address{School of Mathematics, University of Minnesota, Minneapolis, Vincent Hall, MN, 55455}
       \email{chenth@umn.edu}
       \author{David Nadler} 
        
        \address{Department of Mathematics, UC Berkeley, Evans Hall,
Berkeley, CA 94720}
        \email{nadler@math.berkeley.edu}
       
\maketitle
\begin{abstract}
 Let $G_\bbR$ be a real reductive group 
and let $X$ be the corresponding complex symmetric variety under the Cartan bijection.
We construct a stratified homeomorphism between
the based polynomial arc group of $G_\bbR$ and the based polynomial arc space of $X$.
We also prove a multi-point version
where 
we replace  arcs by
moduli spaces of quasi-maps from the projective line $\mathbb P^1$ to $G_\bbR$ and $X$.
The key ingredients in the proof include:
(i) a multi-point generalization of the ``Gram-Schmidt" factorization of 
loop groups,  and (ii) a nodal degeneration of moduli spaces of quasi-maps.
As an application, we show that  for the closures of real spherical orbits 
in  the real affine Grassmannian, their
singularities near the base point are locally homeomorphic to
complex algebraic varieties.

\end{abstract}
\setcounter{tocdepth}{2} 
\tableofcontents

\newcommand{\bA}{{\mathbb A}}

\nc{\qQM}{\mathscr{QM}}

\nc{\sw}{\textup{sw}}
\nc{\inverse}{\textup{inv}}
\nc{\negative}{\textup{neg}}

\nc{\Sym}{\textup{Sym}}
\nc{\sym}{\mathit{sym}}
\nc{\Orth}{\textup{O}}
\nc{\Maps}{\textup{Maps}}
\nc{\sm}{\textup{sm}}

\nc{\image}{\textup{image}}

\nc{\risom}{\stackrel{\sim}{\to}}


\section{Introduction}

\subsection{Overview}
 Let $G$ be a connected complex reductive Lie group with real form $G_\bbR$, with maximal compact $K_c \subset G_\bbR$, with complexification $K \subset G$,   and corresponding complex symmetric variety $X= K\bs G$. (See Section~\ref{s:group data} for a collection of standard Lie theory constructions downstream of this starting point.)

 This paper gives a direct geometric explanation for why much of the local spherical geometry of the real affine Grassmannian $\Gr_\bbR =  G_\bbR((t))/G_{\bbR}[[t]]$  behaves like complex geometry (for example, satisfying the semisimplicity of the Decomposition Theorem as shown in~\cite{N}). For a spherical orbit $S^\lambda_\bbR =  G_{\bbR}[[t]]\cdot \lambda \subset \Gr_\bbR $, we show the  intersection of its  closure
 $\overline{S^\lambda_\bbR}\subset \Gr_\bbR$ 
 with the open cospherical orbit $T^0_\bbR = G_\bbR[t^{-1}] \cdot 1\subset \Gr_\bbR$ is  
 stratified homeomorphic  to a
complex algebraic variety. More precisely, we view the
 open cospherical orbit as the based polynomial arc group 
 \begin{equation}
 T_\bbR^0 \simeq G_\bbR[t^{-1}]_1 = \{g:\bbP_\bbR^1 \setminus\{0 \} \to G_\bbR \, |\,  g(\infty) = 1\}
 \end{equation}
  and construct a stratified homeomorphism 
  \begin{equation}\label{eq:intro strat homeo}
    G_\bbR[t^{-1}]_1 \simeq   X[t^{-1}]_1 
   \end{equation}
     to the based polynomial arc space of the symmetric variety
  \begin{equation}
  X[t^{-1}]_1 = \{x: \bbP^1 \setminus\{0 \}  \to X \, |\,  x(\infty) = 1\}
\end{equation}

Going further, our main theorem provides a generalization of the stratified homeomorphism \eqref{eq:intro strat homeo} to spaces of maps where we allow poles at multiple points.  
Write $\calG_\bbR \ra\bbR^m$ for the group ind-scheme
whose fiber over 
$(z_1,...,z_m)\in\bbR^m$ is the group 
of maps $\gamma: \bbP_{\bbR}^1\setminus \{z_1,...,z_m\} \to G_\bbR$ such that
$\gamma(\infty) = e$.
Similarly, write  $\calX\ra\bbR^m$ for the  ind-scheme
 whose fiber over 
$(z_1,...,z_m)\in\bbR^m$
is the space of
maps 
$\gamma: \bbP^1\setminus \{z_1,...,z_m\} \to X$ such that 
$\gamma(\infty) = e$.

\begin{thm}[See Theorem~\ref{G=X}]\label{intro G=X}
There is a $K_c$-equivariant stratified homeomorphism 
\begin{equation}\label{eq: intro homeo gr}
\xymatrix{
\calG_\bbR\simeq  \calX
}
\end{equation}
over $\bbR^m$ that restricts to real analytic isomorphisms on spherical strata.
\end{thm}

The construction of the stratified homeomorphism \eqref{eq: intro homeo gr} involves two ingredients of independent interest. 

(1) First, we establish a multi-point generalization of the ``Gram-Schmidt" factorization of the polynomial loop group
\beq\label{eq:intro gs} 
G[t, t^{-1}] \simeq \Omega G_c \cdot G[t]
\eeq
given by multiplication of the factors on the right.
Here $\Omega G_c\subset G[t, t^{-1}]$ is the polynomial based loop group of the maximal compact $G_c \subset G$, i.e., the subgroup of maps that take the unit circle $S^1 \in \bbA^1_\bbC$ to $G_c \subset G$ and  $1\in S^1$ to $1\in G$.

The following states one version of our multi-point generalization; the classical case~\eqref{eq:intro gs} results from taking $S = \emptyset$, $S^+ = \{0\}$, $S^- = \{\infty\}$, and $s_0 = 1$.

\begin{thm}[See  Theorem~\ref{thm: factor}]\label{intro thm: factor}
Let $S^1 \subset \bbA^1_\bbC$ be the unit circle, $\bbD^+\subset \bbA^1_\bbC$ the open unit disk, 
and $\bbD^-\subset \bbP^1_\bbC$ the complementary open disk. Let $\kappa(z) = \bar z^{-1}$ be the conjugation of $\bbP^1_\bbC$ with real form $S^1 \subset \bbA^1_\bbC$.

Suppose given a finite set of points $S\subset S^1$, a finite set of points  $S^+\subset \bbD^+$,
with conjugates $ S^- = \kappa(S^+) \subset \bbD^-$.  Set $\bbS = S \cup S^+ \cup S^-$ to be their union.

Let $G[\bbP^1 \setminus \bbS]$ be the group of polynomial maps $\bbP^1 \setminus \bbS \to G$, and
$G[\bbP^1 \setminus \bbS]_{c}  \subset G[\bbP^1 \setminus \bbS]$ the subgroup that takes $S^1 \setminus S$ to $G_c$.

Fix a point $s_0\in S^1 \setminus S$,  and let $G[\bbP^1 \setminus \bbS]_{c, s_0} \subset G[\bbP^1 \setminus \bbS]_{c} $ be the further subgroup that takes $s_0$ to~$1$.

Then multiplication provides  a homeomorphism
\begin{equation}\label{intro eq: real factor}
\xymatrix{
 G[\bbP^1 \setminus \bbS]_{c, s_0} \times G[\bbP^1 \setminus \{S^-\cup S\}] \ar[r]^-\sim & G[\bbP^1 \setminus \bbS]
}
\end{equation}

\end{thm}

(2) Second, in Section~\ref{s: nodal}, we study how maps, and more generally quasi-maps, behave under collisions of marked points (where the maps or quasi-maps are allowed to have poles) and degenerations of the domain curve itself. 

Specifically, for our application, we study two such families:  the  bubbling of the domain curve $\bbP^1$ to the nodal curve   $\bbP^1\vee \bbP^1$ given in local coordinates by $xy = a^2$, for a parameter $a\in \bbA^1$; and the  collision of  distinct but Galois-conjugate marked points $x \not =  \bar x \in \bbP^1$ of  the fixed domain curve  to a single Galois fixed-point $x_0 = \bar x_0 \in \bbP^1$.

\begin{figure}[h]
\centering
\includegraphics[scale=1.2, trim={4.8cm 21.8cm 3.6cm 4cm},clip]{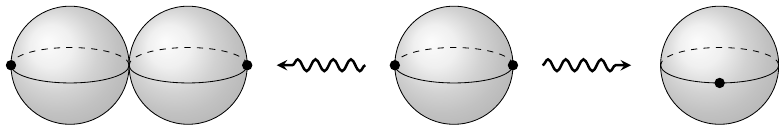}
\caption{Degenerations of curve $\bbP^1$ and marked points $S$. Nodal degeneration of curve $\bbP^1$ to the left, collision of marked points $S$ to the right.}
\end{figure}

A detailed study of quasi-maps under the collision of distinct but Galois-conjugate marked points
can be found in \cite{CN1}. Here let us focus on the key Galois-theoretic property of the bubbling degeneration that we exploit. We start at the parameter $a = 1$ with the conjugation of  $\bbP^1$ given by $x\mapsto \bar x^{-1}$
in the local coordinate $x$. So its real form is the unit circle $S^1 \subset \bbP^1_{\bbC}$ and it exchanges the points $0$ and $\infty$. Then we  extend this over $a \not  = 0 $  by taking $x\mapsto \bar a^2 \bar x^{-1}$.  
When we degenerate to $a = 0$, we find the conjugation $(x, y) \mapsto (\bar y, \bar x)$ exchanging the two components of $\bbP^1\vee \bbP^1$. Thus  the Galois-action on the special fiber is almost free with the node the only real point. This allows us to describe   Galois-equivariant maps, and more generally quasi-maps,   
on $\bbP^1\vee \bbP^1$ in almost completely complex algebraic terms: the real structure constrains their values at the node alone. 

Finally, we identify spaces of Galois-equivariant maps, and more generally quasi-maps, that are stratified locally constant over real   parameters $a\in \bbA_\bbR^1$. This provides homeomorphisms from their general fibers, which involve evident real structures, to their special fibers, where the real structures ``disappear" due to the almost free nature of the Galois-action. 
The proof of Theorem~\ref{intro G=X} is an application of this idea:  the general fiber is the group 
$\calG_\bbR$ of real maps and the special fiber $\calX$ is also a space  of ``real" maps but has a completely complex algebraic interpretation.

\subsection{Acknowledgements} 
The authors would like to thank the generous organizers of the  2019 SE Lie Theory workshop at LSU 
where these results were presented.

T.-H. Chen would also like to 
thank the Institute of Mathematics Academia Sinica in Taipei for support, hospitality, and a nice research environment.

The research of
T.-H. Chen is supported by NSF grant  DMS-2001257
and DMS-2143722, and that of D. Nadler  by NSF grant  DMS-2101466.

\section{Group data}\label{s:group data}
 We collect here notation and standard constructions used throughout the rest of the paper
(for further discussion, see for example~\cite{CN1,N}).
 
 \subsection{Real forms}
 
 Let $G$ be a connected complex reductive Lie group with Lie algebra $\fg$.

Let $G_\bbR\subset G$ be a real form, defined by a conjugation $\eta:G\to G$,
with Lie algebra $\fg_\bbR \subset \fg$.

Choose a Cartan conjugation $\delta:G\to G$ that commutes with $\eta$, and let $G_c\subset G$ be the corresponding maximal compact subgroup with Lie algebra $\fg_c \subset \fg$.

 Introduce the involution $\theta = \delta \circ \eta:G\to G$, and let $K \subset G$ be the fixed subgroup of $\theta$.

One can organize the above groups into the diagram:
\beq\label{eq:diamond}
\xymatrix{&G&\\
K\ar[ru]\ar[ru]&G_c \ar[u]&G_\bbR\ar[lu]\\
&K_c\ar[lu]\ar[u]\ar[ru]&}
\eeq
Here 
$K_c$ is the fixed subgroup of $\theta, \delta$, and $\eta$ together (or any two of the three) and the maximal compact subgroup of $G_\mathbb R$ with complexification $K$.

Fix a maximal $\delta$-stable split torus $A_\bbR\subset G_\bbR$ with Lie algebra $\fraka_\bbR\subset \fg_\bbR$,
and complexification $A\subset G$ with Lie algebra $\fraka\subset \fg$. Fix a maximal  $\delta$-stable torus $T_\bbR\subset G_\bbR$ containing $A_\bbR$  with Lie algebra $\frakt_\bbR\subset \fg_\bbR$,
and complexification $T\subset G$ with Lie algebra $\frakt\subset \fg$. 

 Fix a Borel subgroup  $B\subset G$  containing $T$ with Lie algebra $\frakb\subset \fg$, and unipotent radical $U\subset B$ with Lie algebra $\fraku \subset \frakb$. Let $H = B/U$ be the universal Cartan with Lie algebra 
 $\frakh = \frakb/\fraku$. 
 Note 
the composition 
$T \to B \to H$
is an isomorphism.

Let $W=N_G(\frakt)/Z_G(\frakt)= N_G(T)/T$ denote the Weyl group,
and $W_0 =N_K(\fa)/Z_K(\fa)$ the ``baby Weyl group".

Let $\Lambda_T = \Hom(\bbG_m, T)$ denote the coweight lattice of $T$ with dominant coweights $\Lambda^+_T\subset \Lambda_T$. Let $R_G \subset \Lambda_T$ denote the coroot lattice generated by the simple coroots $\Delta_G\subset R_G$
and let $R_G^+=R_G\cap\Lambda_T^+$.
 Let $\pi_1(G)$ denote the fundamental group based at the identity $e\in G$, and recall the natural isomorphism $\Lambda_T/R_G \risom \pi_1(G)$.

Similarly, let $\Lambda_A = \Hom(\bbG_m, A)$ denote the coweight lattice of $A$ with dominant coweights $\Lambda^+_A =  \Lambda_A\cap \Lambda_T^+$.

Introduce the  symmetric variety  $X= K\bs G$.
The map $\pi:G\to G$, $\pi(g)=\theta(g)^{-1}g$
factors through $X$, and descends to an isomorphism $X\risom G_\sym^0$, where $G_\sym = \{g\in G \, |\, \theta(g^{-1}) = g\}$, and $G_\sym^0\subset G_\sym$ denotes its neutral component.
Note the inclusion $A \subset G_\sym^0$ induces a map $\Lambda_A \to \pi_1(G_\sym) \simeq \pi_1(X)$.

Consider the induced map $\pi_*:\pi_1(G)\to \pi_1(X)$. Define $\calL\subset\Lambda_A$
to be the inverse image of $\pi_*(\pi_1(G))\subset \pi_1(X)$ under the natural map 
$\Lambda_A\to\pi_1(X)$, and
set $\calL^+ = \calL \cap \Lambda^+_T$.
Note that $\mL=\Lambda_A$ if and only if $K$ is connected.

\begin{example}[Complex groups]
An important special case is when the real form is itself a complex group. To avoid potential confusion in this case, we will use the alternative notation $\sw:G\times G\to G\times G$, $\sw(g, h) = (h, g)$ for the ``swap" Cartan involution with fixed-point subgroup
the diagonal $G\subset G\times G$. 
The corresponding conjugation  is the composition  $\sw_\delta =\sw\circ (\delta\times \delta)$, and
the compact conjugation is the product $\delta \times \delta$.
We identify the corresponding symmetric variety with the group $G \simeq G\bs(G\times G)$ via inclusion to the left factor, which coincides with 
 the subspace  $G\simeq \{(g, h)\in G\times G \, |\, h = g^{-1}\}$.
  \end{example}

  Introduce  the affine quotient
\begin{equation}
\xymatrix{
\fc = \ft\gitquot{W} = \Spec( \calO(\ft)^{W}) \simeq \fg/\hspace{-0.25em}/G =  \Spec(\calO(\fg)^G)
}
\end{equation}

The conjugation $\eta$ descends to a real structure on $\fc$, and we denote its real points by $\fc_\bbR$.
We also have the real characteristic  polynomial map  $\fg_\bbR \to \fc_\bbR$ from real matrices to their unordered eigenvalues.


 \subsection{Loop groups}

Let $\calK = \bbC((z))$ denote the field of Laurent series, $\calO = \bbC[[z]]$ the ring of power series, and $\calO^- = \bbC[z^{-1}]$ the ring of Laurent poles.
Let $\Gr = G(\calK)/G(\calO)$ be the affine Grassmannian of $G$.\footnote{Our concerns in this paper will be exclusively topological,
and we will ignore any non-reduced structure throughout.} 
For any $g\in G(\calK)$ we denote by $[g]\in\Gr$ the 
corresponding coset.

Via the natural inclusion $\Lambda_T\to T(\calK) \subset G(\calK)$, any coweight $\lambda \in \Lambda_T$ defines a point  $[\lambda] \in \Gr$.
For a dominant coweight $\lambda\in \Lambda_T^+$, introduce the $G(\calO)$-orbit $S^\lambda 
= G(\calO) \cdot [\lambda]\subset \Gr $ (spherical stratum)
and $G(\calO^-)$-orbit $T^\lambda 
= G(\calO^-) \cdot [\lambda] \subset \Gr$  (cospherical stratum).
Recall the disjoint union decompositions
\begin{equation}\label{eq:strat}
\xymatrix{
\Gr = \coprod_{\lambda\in \Lambda_T^+}   S^\lambda & \Gr= \coprod_{\lambda\in \Lambda_T^+} T^\lambda
}
\end{equation}

We can similarly repeat the above constructions over the real numbers.
Let $\calK_\bbR = \bbR((z))$ denote the field of real Laurent series, 
 $\calO_\bbR = \bbR[[z]]$ the ring of real power series,
 and  $\calO^- = \bbR[z^{-1}]$ the ring of real Laurent poles.
  Let $\Gr_\bbR=G_\bbR(\calK_\bbR)/G_\bbR(\mO_\bbR)$ be the real affine Grassmannian of the real form $G_\bbR$.

For a dominant coweight $\lambda\in \Lambda_A^+  = \Lambda_A \cap \Lambda_T^+$, 
set $S^\lambda_{\bbR} 
=  G_\bbR(\calO_\bbR) \cdot [\lambda] =  S^\lambda\cap \Gr_\bbR $ (real spherical stratum)
and $T_{\bbR}^{\lambda} =  G_\bbR(\calO^-_\bbR) \cdot [\lambda]
= T^\lambda\cap \Gr_\bbR $ 
 (real cospherical stratum).
We have the disjoint union decompositions
\begin{equation}\label{eq:strat}
\xymatrix{
\Gr_\bbR = \coprod_{\lambda\in \Lambda_A^+}   S^\lambda_\bbR & \Gr_\bbR= \coprod_{\lambda\in \Lambda_A^+} T_\bbR^\lambda
}
\end{equation}

Note that all of above constructions result  from the natural  conjugation
$\eta_\calK:G(\calK) \to G(\calK)$, $\eta_{\calK}(g(z)) = \eta(g(\bar z))$ with real form 
$G_\bbR(\calK_\bbR) \subset G(\calK)$.

Now let us recall some parallel constructions where  we work with the global curve $\bbG_m = \Spec(\bbC[z, z^{-1}]) = \bbP^1 \setminus \{0,\infty\}$ in place of the punctured disk $D^\times = \Spec (\calK)$,
and
 similarly  $\bbA^1 = \Spec(\bbC[z]) = \bbP^1 \setminus \{\infty\}$ 
  in place of the disk $D = \Spec(\calO)$.

Introduce  the polynomial loop group $LG = G(\bbC[z, z^{-1}]) \subset G(\calK)$ of  maps $\bbG_m\to G$,  and similarly, the polynomial arc group $L_+ G = G(\bbC[z]) \subset G(\calO)$ of maps $\bbA^1\to G$. 
 Recall the natural map  is an isomorphism $LG/L_+G \risom  G(\calK)/G(\calO) =   \Gr$.

We have an additional compact conjugation
$\kappa:\bbG_m \to \bbG_m$,
$\kappa(z) = \bar z^{-1}$ with real points the unit circle $S^1\subset \bbC^\times$.\footnote{There is another  conjugation $z\mapsto -\bar z^{-1}$ of the curve $\bbG_m$ with empty real points, but  it will not appear in the developments of this paper.}    Note that $\kappa$ does not preserve the punctured disk $D^\times \subset \bbG_m$.

We extend the conjugations $\eta, \delta:G\to G$ to conjugations $ \eta, \delta:LG\to LG$ by the formulas $ \eta(g)(z) = \eta(g(\kappa(z))), \delta(g)(z) = \delta(g(\kappa(z)))$. The corresponding real forms 
 $L G_\bbR, LG_c\subset LG$ consist of maps $g:\bbG_m\to G$ that take $S^1\subset \bbG_m$ respectively to $G_\bbR, G_c \subset G$. We denote by $\Omega G_\bbR\subset LG_\bbR$, $ \Omega G_c \subset LG_c$ the based subgroups of maps that take $1\in S^1$ to the identity $e\in G$. Note that multiplication gives isomorphisms
$\Omega G_\bbR \times G_\bbR \risom LG_\bbR$, $\Omega G_c \times G_c \risom LG_c$.

%


\section{Factorization}

In this section, we record a useful extension of a well-known loop group factorization.

Recall the ``Gram-Schmidt"
 factorization
  \beq \label{eq:factor}
  \xymatrix{
  \Omega G_c \times L_+ G  \ar[r]^-\sim &  LG
  }
  \eeq
of the polynomial loop group $LG = G(\bbC[z, z^{-1}])$, its arc subgroup $L_+ G = G(\bbC[z])$, and the based loop group $\Omega G_c \subset LG$. Note that \eqref{eq:factor} is equivalent to the fact that the $  \Omega G_c$-action
on the base point $[e]\in  \Gr \simeq LG/L_+ G$ induces a homeomorphism 
  \begin{equation}\label{eq:loop=gr}
  \xymatrix{
  \Omega G_c\ar[r]^-\sim & \Gr 
  }
  \end{equation}
%

Consider the projective line $ \bbP^1 = \on{Proj}(\bbC[z_0, z_1])$. Using the local coordinate $z = z_1/z_0$,  we will regard the complex points of $\bbP^1$ as the Riemann sphere $\bbP^1(\bbC) = \bbC \cup \{\infty\}$.

Given any finite set of points $\bbS \subset\bbP^1$, we will denote by $G[\bbP^1 \setminus \bbS]$
the group ind-scheme of maps  $\bbP^1\setminus \bbS\to G$. We will only be interested in the complex points of 
 $G[\bbP^1 \setminus \bbS]$ equipped with the classical topology, and thus ignore any non-reduced structure.
 
 For example, if $\bbS=\emptyset$, we have $G[\bbP^1] = G$;
if $\bbS=\{\infty\}$, we have $G[\bbP^1\setminus \{\infty\}] = L_+G$; and
if $\bbS=\{0, \infty\}$,  we have $G[\bbP^1\setminus \{0, \infty\}] = LG.$

For a pair $s= (s_+, s_-)$ of points $s_+, s_- \in \bbP^1$, we will write $L_s G = G[\bbP^1 \setminus \{s_+, s_-\} ]$. If we choose an isomorphism $\varphi:\bbP^1\risom \bbP^1$, $\varphi(0) = s_+$, $\varphi(\infty) = s_-$, then we obtain an isomorphism
\beq\label{eq:pullback isom}
\xymatrix{
\varphi^*:L_s G \ar[r]^-\sim & LG & \varphi^*(g) = g \circ \varphi
}
\eeq
 To uniquely prescribe $\varphi$, we can further require $\varphi(p) = q$, for some $p, q\in \bbP^1$
 with $p \not = 0, \infty$, $q\not = s_+, s_-$.

\begin{lemma}\label{l:factor}
Suppose $S^+, S^-\subset \bbP^1$ are disjoint, finite sets of points with $S^-$ non-empty.

For $\bbS = S^+ \cup S^-$, the natural map is an isomorphism
\beq
\xymatrix{
G[\bbP^1 \setminus \bbS]/G[\bbP^1 \setminus S^-] \ar[r]^-\sim & \prod_{s\in S^+} \Gr_{s}
}
\eeq
\end{lemma}

\begin{proof}
Set $U^\pm = \bbP^1 \setminus S^\pm$. Since $ S^-$ is non-empty,  the restriction $\calE|_{U^-}$  of any $G$-bundle $\calE$ on $\bbP^1$ may be trivialized. Thus the left hand side classifies a trivial $G$-bundle   $\calE_0^+$ on $U^+$, a $G$-bundle $\calE^-$ on $U^-$, and an isomorphism between them over $U^+\cap U^-$. This is equivalent to a $G$-bundle $\calE$ on $\bbP^1$ with a trivialization over $U^+$. It is standard that this factorizes to give the right hand side.
\end{proof}

Fix
a non-zero real number $a$  and
consider the conjugation  $\kappa_a: \bbP^1\to \bbP^1$, 
$\kappa(z) = \frac{1}{a^2\bar z}$ with real points the  circle $S^1_{a^{-1}} = \{ |z| = |a^{-1}| \}$.
Let $\bbD^+_a = \{|z| < |a^{-1}|\}$ denote the open  disk,
and $\bbD^-_a = \{|z| > |a^{-1}|\}$  the complementary open disk.

If  a finite set of points $\bbS \subset\bbP^1$ is invariant under $\kappa_a$, then the conjugation $\delta:G\to G$ induces a conjugation $\delta: G[\bbP^1 \setminus \bbS]\to G[\bbP^1 \setminus \bbS]$, $\delta(g)(z) = \delta(g(\kappa_a(z))$.
The corresponding real points, denoted by $G[\bbP^1 \setminus \bbS]_c$, consist of maps $g:\bbP^1 \setminus \bbS\to G$ 
 that take  $S^1 \setminus (S^1\cap \bbS)$ to the maximal compact $G_c\subset G$.
 For example, if $\bbS=\emptyset$, we have $G[\bbP^1]_c = G_c$;
if $\bbS=\{\infty\}$, we have $G[\bbP^1\setminus \{\infty\}]_c = G_c$.

For a pair $s= (s_+, s_-)$ of points $s_+\in \bbD^+_a$, $s_- =\kappa_a(s_+) \in \bbD_a^-$, we write $L_s G_c = G[\bbP^1 \setminus \{s_+, s_-\} ]_c$. 
Observe that the map $\varphi:\bbP^1\risom \bbP^1$, $\varphi(0) = s_+$, $\varphi(\infty) = s_-$,
$\varphi(a^{-1}) = a^{-1}$ commutes with the conjugation $\kappa_a$. Thus the isomorphism~\eqref{eq:pullback isom}
restricts to an isomorphism on real points
\beq\label{eq:pullback isom real}
\xymatrix{
\varphi^*:L_s G_c \ar[r]^-\sim & LG_c 
}
\eeq
For any $s_0\in S^1$, we will write $\Omega_{s, s_0} G_c \subset L_s G_c$ for the based subgroup of those maps with $g(s_0) = e$.

For finite subsets $\bbS \subset S^1$, we have the following:

\begin{lemma}\label{l:const}
For any finite subset  $\bbS \subset S^1$, any element of  $G[\bbP^1\setminus \bbS]_c$ is constant, and hence 
$G[\bbP^1\setminus \bbS]_c = G_c$.
\end{lemma} 

\begin{proof}
Choose an embedding $G\subset \GL(N)$ so that $G_c\subset U(N)$. For any $g\in G[\bbP^1\setminus \bbS]_c$, consider a matrix entry $g_{ij}: \bbP^1\setminus \bbS\to \bbA^1$. 
The   restriction $g|_{S^1 \setminus \bbS}$ lands in $G_c \subset U(N)$, hence the restricted matrix entry $g_{ij}|_{S^1 \setminus \bbS}$ is bounded. Thus $g_{ij}$ is bounded, and hence constant. 
\end{proof}

Now we are ready to state the main result of this section. 
The special case when $S=\emptyset$ and  $S^+ = \{0\}$, so that $S^- = \{\infty\}$,  recovers the
``Gram-Schmidt"
 factorization~\eqref{eq:factor}.

\begin{thm}\label{thm: factor}

Suppose given a finite set of points $S\subset S^1$, a finite set of points  $S^+\subset \bbD^+$,
with conjugates $ S^- = \kappa(S^+) \subset \bbD^-$.  Set $\bbS = S \cup S^+ \cup S^-$ to be their union.

Fix a point $s_0\in S^1 \setminus S$, and consider the kernel of the evaluation
 \beq
  \xymatrix{
  G[\bbP^1 \setminus \bbS]_{c, s_0} := \ker(\on{ev}_{s_0}:G[\bbP^1 \setminus \bbS]_{c} \ar[r] &  G_c)
  }
\eeq

Then we have:

1) Multiplication provides  a homeomorphism
\begin{equation}\label{eq: real factor}
\xymatrix{
 G[\bbP^1 \setminus \bbS]_{c, s_0} \times G[\bbP^1 \setminus \{S^-\cup S\}] \ar[r]^-\sim & G[\bbP^1 \setminus \bbS]
}
\end{equation}

2) The natural action map provides a homeomorphism
\begin{equation}\label{eq: real homeo}
\xymatrix{
 G[\bbP^1 \setminus \bbS]_{c,s_0} \ar[r]^-\sim & G[\bbP^1 \setminus \bbS]/G[\bbP^1 \setminus \{S^-\cup S\}]  \simeq   
 \prod_{s\in S^+} \Gr_{s}
}
\end{equation}

3)
For any ordering $S^+ = \{ s_1, \ldots, s_k\}$, multiplication provides  a homeomorphism
\begin{equation}\label{eq: real ordered factor}
\xymatrix{
 \Omega_{s_1, s_0} G_c \times \cdots \times \Omega_{s_k, s_0} G_c  \ar[r]^-\sim & 
 G[\bbP^1 \setminus \bbS]_{c, s_0}
}
\end{equation}

\end{thm}

%
%
%
%
%
%

\begin{proof}
Clearly 1) and 2) are equivalent. We will prove 2) and 3) simultaneously by
 considering the diagram
\begin{equation*}
\xymatrix{
  \Omega_{s_1, s_0} G_c \times \cdots \times \Omega_{s_k, s_0} G_c\ar[r] & G[\bbP^1 \setminus \bbS]_{c,s_0} \ar[r] & G[\bbP^1 \setminus \bbS]/G[\bbP^1 \setminus \{S^-\cup S\}]  \simeq   
 \prod_{s\in S^+} \Gr_{s}
}
\end{equation*}

Clearly the composite map $ \Omega_{s_1, s_0} G_c \times \cdots \times \Omega_{s_k, s_0} G_c \to  \prod_{s\in S^+} \Gr_{s}$ is a homeomorphism by~\eqref{eq:loop=gr}. Thus it suffices to show
the map $G[\bbP^1 \setminus \bbS]_{c,s_0}\to \prod_{s\in S^+} \Gr_{s}$ is injective, i.e.,
the $G[\bbP^1 \setminus \bbS]_{c,s_0}$-action on the base-point $[e]\in \prod_{s\in S^+} \Gr_{s}$ is free. Any element in the stabilizer must extend across $S^+$, hence also across $S^-$, and so lie in $G[\bbP^1 \setminus S]_{c, s_0}$.
Thus by Lemma~\ref{l:const}, any element in the stabilizer must be constant. Since its evaluation at $s_0$ is trivial, the element must  in fact be trivial.
\end{proof}


\section{Quasi-maps}\label{quasi-maps}
\subsection{Definitions}

Let $B$ be a smooth complex base-scheme.

Let $\pi:\calZ\to B$ be a projective family of curves, with fibers denoted $\calZ_b = \pi^{-1}(b)$. 
We do not assume the total space $\calZ$ or  fibers $\calZ_b$  are smooth. See the next section for the specific setting used in this paper.

Let $\Bun_G(\calZ/B)$ denote the moduli stack of a point $b\in B$ and a $G$-bundle $\calE$ on the fiber $\calZ_b$.
More precisely, an $S$-point consists of an $S$-point $a:\Spec S\to B$ and a $G$-bundle $\calE$ on the 
corresponding fiber product $\calZ \times_{B} \Spec S$. 
Denote by $p:\Bun_G(\calZ/B)\to B$  the evident projection with fibers $p^{-1}(b) = \Bun_G(\calZ_b)$.

Let $\sigma = (\sigma_1,\ldots, \sigma_n):B\to\calZ^n$ be an ordered $n$-tuple of sections of $\pi$.
We allow the  sections to intersect or coincide, but require $\sigma(b)\in \calZ^n_b$ to be a smooth point, for all $b\in B$.

For an affine $G$-variety $X$, 
let $\QM_{G, X}(\calZ/B, \sigma)$ denote the ind-stack of quasi-maps   classifying
a point $b\in B$, a $G$-bundle $\calE$ on the fiber $\calZ_b$, and a section
\begin{equation}
\xymatrix{
s:\calZ_b\setminus \{\sigma_1(b), \ldots, \sigma_n(b)\} \ar[r] & X_\calE
}
\end{equation}
to the associated $X$-bundle over the complement of the points $\sigma_1(b),\ldots,  \sigma_n(b)\in \calZ_b$.
We have the evident forgetful maps
\beq
\xymatrix{
q:\QM_{G, X}(\calZ/B, \sigma)\ar[r] & \Bun_G(\calZ/B)\ar[r] &  B
}
\eeq
with fibers  $q^{-1}(b) = \QM_{G, X}(\calZ_b, \sigma(b))$.

Let $\xi:B\to\calZ$ be another section of $\pi$ such that 
$\xi(b)\neq\sigma_i(b)$, for all $b\in B$,  and $i=1,\ldots, n$. 

Let $\QM_{G, X}(\calZ/B, \sigma,\xi)$ denote the ind-stack of rigidified quasimaps classifying quadruples $(b,\mE,s,\iota)$
where $(b,\mE,s)$ is a quasi-map as above and $\iota:\mE_K|_{\xi(b)}\is K$ is a trivialization, where 
$\mE_K$ is the $K$-reduction of $\mE$ on $\calZ_b\setminus \{\sigma_1(b), \ldots, \sigma_n(b)\}$ given by the section $s$.
We have the evident forgetful maps
\beq
\xymatrix{
r:\QM_{G, X}(\calZ/B, \sigma,\xi)\ar[r] & \Bun_G(\calZ/B)\ar[r] &  B
}
\eeq
with fibers 
$r^{-1}(b) = \QM_{G, X}(\calZ_b, \sigma(b),\xi(b))$.

Suppose given conjugations $c_\calZ:\calZ\to\calZ$ and $c_{B}:B\to B$ such that $\pi\circ c_\calZ = c_B \circ \pi$.

The conjugation $\eta$ of $G$ induces a conjugation $\eta:\Bun_G(\calZ/B)\to \Bun_G(\calZ/B)$ given by
$
\eta(b, \calE) = (c_{B}(b), c_{\calZ}^*\calE_\eta)
$
 where 
we write $c_{\calZ}^*\calE_\eta
$ for the bundle $c_{\calZ}^*\mE$ with  its $\eta$-twisted $G$-action.
We denote by $\Bun_G(\calZ/B)_\bbR$ the corresponding real points.

Suppose $n=2m$ so that we have
$\sigma = ( \sigma^+,  \sigma^-):B\to \calZ^n$ with components $ \sigma^\pm = (\sigma^\pm_1, \ldots, \sigma^\pm_m):B\to \calZ^m$.
Suppose further that $c_\calZ\circ \sigma^\pm_i \circ c_{B}=\sigma^\mp_i$, for $i=1, \ldots, m$.
Then the conjugation $\eta$ of $G$  induces a conjugation $\eta:\QM_{G, X}(\calZ/B, \sigma)\to \QM_{G, X}(\calZ/B, \sigma)$. We denote by $\QM_{G, X}(\calZ/B, \sigma)_\bbR$ the  corresponding real points.

Suppose further that  $c_\calZ\circ \xi = \xi \circ c_B$. 
Then the conjugation $\eta$ of $G$ similarly induces a  conjugation $\eta: \QM_{G, X}(\calZ/B, \sigma, \xi)\to \QM_{G, X}(\calZ/B, \sigma, \xi)$. We denote by $\QM_{G, X}(\calZ/B, \sigma, \xi)_\bbR$
the  corresponding real points.

%
%
%

\subsection{Uniformizations}\label{uniformization}
Let $\Gr_{G,\calZ,\sigma_i}$ (resp. $\Gr_{G,\calZ,\sigma}$)
denote the Beilinson-Drinfeld Grassmannian 
of a point $b\in B$, a $G$-bundle $\mE$ on $\calZ_b$, and a section 
$s:\calZ_b\setminus\sigma_i(b)\to\mE$ (resp. $s:\calZ_b\setminus\{\sigma_1(b),\ldots, \sigma_n(b)\}\to\mE$). 

Let $G[\calZ,\hat\sigma_i]$ denote the group scheme of a point $b\in B$
and a section $\hat D_{\sigma_i(b)}\to G$, where $\hat D_{\sigma_i(b)}$
is the formal disk around $\sigma_i(b)$.

Let $G[\calZ,\sigma_i]$ (resp. $G[\calZ,\sigma]$) denote the  group ind-scheme of a point $b\in B$
and a section $s:\calZ_b\setminus\{\sigma_i(b)\}\to G$ (resp. $s:\calZ_b\setminus\{\sigma_1(b),\ldots, \sigma_n(b)\}\to G$). 

Let $G[\calZ,\sigma_i,\xi]$ (resp. $G[\calZ,\sigma,\xi]$)
denote the subgroup ind-scheme of $G[\calZ,\sigma_i]$ (resp. $G[\calZ,\sigma]$)
consisting of $(b,s)\in G[\calZ,\sigma_i]$ (resp. $(b,s)\in G[\calZ,\sigma]$)
such that $s(\xi(b))=e$. 

For any $b\in B$, we write 
$\Gr_{G,\calZ,\sigma,b}$, $G[\calZ,\sigma, b]$, etc., for the respective fibers over $b$.

The conjugations
$c_\calZ$, $c_{B}$, $\eta$ induce conjugations on 
$\Gr_{G,\calZ,\sigma}$, $G[\calZ,\sigma]$, etc., and we denote by 
$\Gr_{G,\calZ,\sigma,\bbR}$, $G[\calZ,\sigma]_\bbR$, etc., the respective 
real points.

For any $b\in B(\bbR)$, we write 
$\Gr_{G,\calZ,\sigma,b,\bbR}$, $G[\calZ,\sigma, b]_\bbR$, etc., for the respective fibers over $b$.

The group ind-scheme $G[\calZ,\sigma]$ (resp.
$G[\calZ,\sigma_i]$) naturally acts on 
$\Gr_{G,\calZ,\sigma}$ (resp. $\Gr_{G,\calZ,\sigma_i}$) and we have uniformizations morphisms
\beq
\xymatrix{
G[\calZ,\sigma_i]\backslash\Gr_{G,\calZ,\sigma_i}\ar[r] & \Bun_G(\calZ/B)
&
G[\calZ,\sigma]\backslash\Gr_{G,\calZ,\sigma}\to\Bun_G(\calZ/B)
}\eeq
\beq\label{eq:qm almost unif}
\xymatrix{
K[\calZ,\sigma]\backslash\Gr_{G,\calZ,\sigma}\ar[r] &\QM_{G,X}(\calZ/B,\sigma)
}
\eeq
\beq\label{eq:qm unif}
\xymatrix{
K[\calZ,\sigma,\xi]\backslash\Gr_{G,\calZ,\sigma}\ar[r] & \QM_{G,X}(\calZ/B,\sigma,\xi)
}
\eeq

The uniformizations are compatible with the given conjugations, hence 
induce uniformizations on real points:
\beq
\xymatrix{
G[\calZ,\sigma_i]_\bbR\backslash\Gr_{G,\calZ,\sigma_i,\bbR}\ar[r] &\Bun_G(\calZ/B)_\bbR
&
G[\calZ,\sigma]_\bbR\backslash\Gr_{G,\calZ,\sigma,\bbR}\ar[r] &\Bun_G(\calZ/B)_\bbR
}
\eeq
\beq\label{eq:qm almost unif real}
\xymatrix{
K[\calZ,\sigma]_\bbR\backslash\Gr_{G,\calZ,\sigma,\bbR}\ar[r] &\QM_{G,X}(
\calZ/B,\sigma)_\bbR
}
\eeq
\beq\label{eq:qm unif real}
\xymatrix{
K[\calZ,\sigma,\xi]_\bbR\backslash\Gr_{G,\calZ,\sigma,\bbR}\ar[r] &\QM_{G,X}(\calZ/B,\sigma,\xi)_\bbR
}
\eeq

\subsection{Morphisms}\label{morphisms}
Let $G_1$ and $G_2$ be two reductive groups with 
complex conjugations $\eta_1$ and $\eta_2$ and Cartan involutions $\theta_1$ 
and $\theta_2$ respectively. 
Then the constructions of quasi-maps, rigidified quasi-maps, uniformization
morphisms, and real forms of those are functorial with respect to 
homomorphisms
$f:G_1\to G_2$ that intertwine
$\eta_1,\eta_2$ and $\theta_1,\theta_2$.


\section{Nodal degeneration}\label{s: nodal}

We invoke here the preceding constructions  in a situation to be studied in the remainder of the paper.

\subsection{Universal family}\label{nodal curve}

Let $\bA^1 = \Spec(\bbC[a])$ be the affine line with coordinate $a$. 
Consider the product $\bP_x^1 \times \bP_y^1$  with respective homogeneous coordinates $[x_0, x_1], [y_0, y_1]$,
local coordinates $x = x_1/x_0, y= y_1/y_0$,
and projections $p_x, p_y:\bP_x^1 \times \bP_y^1\to \bP^1$. For convenience, 
we will also set $t^+ = x^{-1}, t^- = y^{-1}$.

Introduce the surface $Z \subset \bP_x^1 \times \bP_y^1 \times \bA^1$ cut out by $x_1 y_1 = a^2 x_0 y_0$.
We will regard $Z$ as a family of curves via the evident projection $p:Z \to \bA^1$. We denote the fibers by $Z_a = p^{-1}(a)$,
for $a\in \bA^1$. When $a \not = 0$, projection along $p_x$ or $p_y$ provides an isomorphism $Z_a \simeq \bP^1$.
When $a = 0$, the image of the inclusion $Z_0\subset \bP^1 \times \bP^1$ is the nodal curve 
\begin{equation}
\xymatrix{
\bP^1_x\vee \bP^1_y := (\bP_x^1 \times \{0\}) \cup (\{0\} \times  \bP_y^1)
}
\end{equation}

Equip $\bbA^1$ with the usual conjugation $c(a) = \bar a$ with real points  $\bA^1(\bbR) \simeq \bbR$.
Equip $Z$ with the twisted conjugation
$c_Z(x, y, a) = (\bar y,  \bar x, \bar a)$. 
When $a\not = 0 \in \bA^1(\bbR)$, under the identification $p_x:Z \risom \bP^1_x$, we have $c_Z(x) = a^2/\bar x $, and
thus $p_x: Z_a(\bbR)\risom \bbR\bbP^1$. 
When $a= 0 \in \bA^1(\bbR)$, the components of $Z_0$  are exchanged by $c_Z$, and $Z_0(\bbR)$ is the single  point $x=0, y=0$.

We will also use the 
coordinates $t^+=x^{-1}, t^-=y^{-1}$.
Note that in terms of the coordinate $t^+$ of $\mathbb P^1$
the complex conjugation is given by 
$c_\calZ(t^+)=\frac{1}{a^2\bar t^+}$.
Let $\bA_{+}^1 = \Spec(\bbC[t^+])$, $\bA_{-}^1 = \Spec(\bbC[t^-])$ be the affine lines with 
respective coordinates $t^+=x^{-1}, t^-=y^{-1}$. Thus we have natural open embeddings 
$\bA_{+}^1 \subset \bP^1_x$, $\bA^1_{-} \subset \bbP^1_y$.

Fix $n=2m$. 
Consider the base scheme 
\beq
\xymatrix{
B = \bbA_a^1 \times B^+ \times B^- 
&
B^\pm = (\bA_{\pm}^1)^m 
}
\eeq
with 
coordinates $(a, t^+, t^-)$ where $t^\pm = (t^\pm_1, \ldots, t^\pm_m)$. 
Equip $B$ with the twisted conjugation $c_A(a, t^+, t^-) = (\bar a, \bar t^-, \bar t^+)$
 where $\bar t^\pm = (\bar t^\pm_1, \ldots, \bar t^\pm_m)$. 
 Projection provides an identification of real points
\beq\label{B(R)}
\xymatrix{
B(\bbR) \simeq \bbA^1(\bbR) \times B^+(\bbC) \simeq \bbR\times \bbC^m
}
\eeq

Consider the   family of  curves 
\beq
\xymatrix{
p:\calZ=Z\times_{\bbA^1} B \ar[r] &  B
}
\eeq
with the tautological sections
\beq
\xymatrix{
\sigma=(\sigma^+, \sigma^-):B \ar[r] & \calZ^{2m}
}
\eeq
\beq
\xymatrix{
\sigma_i^+(a, t^+,t^-) = (a,  [t_i^+, 1], [1, t_i^+ a^2], t^+,t^-) 
& i = 1, \ldots, m
}
\eeq
\beq
\xymatrix{
\sigma_i^-(a, t^+, t^-) = (a, [1, t_i^- a^2], [t_i^-, 1],t^+, t^-) 
 & i = 1, \ldots, m
}
\eeq
Equip $\calZ$ with the twisted conjugation $c_\calZ(a, x, y, t^+, t^-) = (\bar a, \bar y, \bar x,  \bar t^-, \bar t^+)$.
 Projection  provides an identification of real points
\beq
\xymatrix{
\calZ(\bbR) \simeq Z(\bbR) \times  B^+(\bbC) \simeq Z(\bbR)  \times  \bbC^m
}
\eeq

Introduce  the canonical section 
\beq
\xymatrix{
 \xi:B\ar[r] &  \calZ & 
 \xi(a, t^+, t^-) = (a, [1, a], [1, a], t^+, t^-) 
}
\eeq

We denote by $B'$ the open subset of $B$ consisting of 
$(a,t^+,t^-)$ with $a\neq 0$ and we define 
$\calZ':=\calZ\times_BB'$.

Note that  $\xi(a, t^+, t^-) = \sigma_i^\pm(a, t^+, t^-)$ if and only if   $t_i^\pm =a^{-1}$. 
Denote by $B_\circ \subset B$ the open complement of such coincidences.
Introduce  the base-change
\beq
\xymatrix{
p:\calZ_\circ = \calZ\times_{B} B_\circ\ar[r] & B_\circ 
}\eeq
and note, by construction, the tautological sections 
\beq
\xymatrix{
\sigma=(\sigma^+, \sigma^-):B_\circ \ar[r] & \calZ_\circ^{2m}
}
\eeq
 do not intersect the canonical section  
\beq
\xymatrix{
\xi:B_\circ \ar[r] & \calZ_\circ 
}\eeq

With the above choices fixed, we will study the real points of the  ind-stacks of quasimaps with their natural projections
\beq
\xymatrix{
q:\QM_{G, X}(\calZ/B , \sigma)_\bbR \ar[r] & B(\bbR) 
}
\eeq
\beq
\xymatrix{ 
r:\QM_{G, X}(\calZ_\circ/B_\circ , \sigma, \xi)_\bbR\ar[r] & B_\circ(\bbR) 
}
\eeq
%


\quash{
\subsection{Generic restriction}

We focus here on the  preceding constructions over  the generic parameters $\bbG_m = \{a \not = 0 \}\subset \bbA^1_a$.


Set $A' =  \bbG_m \times A^+ \times A^-$,  
 $Z' = Z\times_{\bbA^1} \bbG_m$, and
 $\calZ' = Z' \times A^+ \times A^-$.
 Restrict the conjugations and sections accordingly.

\subsubsection{Extending tautological sections}

Over the 
 generic parameters $\bbG_m = \{a \not = 0 \}\subset \bbA^1_a$, it is possible  to extend the domain of the tautological sections 
 as follows.

Introduce the extended base 
\beq
\xymatrix{
\bar A' = \bbG_m  \times \bar A^+  \times \bar A^-
&
\bar A^+ = (\bP^1_x)^m 
&
\bar A^- = (\bP^1_y)^m 
}
\eeq
with the extended conjugation
 $c_{\bar A'}(a, t^+, t^-) = (\bar a, \bar t^-, \bar t^+)$.

 Introduce the corresponding  extended family of curves
\beq
\xymatrix{
\bar p:\bar \calZ' = Z' \times \bar A^+ \times \bar A^- 
\ar[r] & \bbG_m  \times \bar A^+  \times \bar A^- =  \bar A' 
}
\eeq
with the  extended conjugation $c_{\bar \calZ'}(a, x,y,  t^+, t^-) = (\bar a, \bar y,\bar x,  \bar t^-, \bar t^+)$.

Now the tautological sections naturally extend 
\beq
\xymatrix{
\sigma=(\sigma^+, \sigma^-):\bar A' \ar[r] & (\bar \calZ')^{2m}
}
\eeq
\beq
\xymatrix{
\sigma_i^+(a, t^+) = (a,  [t_i^+, 1], [1, t_i^+ a^2], t^+) 
& i = 1, \ldots, m
}
\eeq
\beq
\xymatrix{
\sigma_i^-(a, t^-) = (a, [1, t_i^- a^2], [t_i^-, 1], t^-) 
 & i = 1, \ldots, m
}
\eeq
as does the canonical section
\beq
\xymatrix{
 \xi:\bar A'\ar[r] &  \bar \calZ' & 
 \xi(a, t^+, t^-) = (a, [1, a], [1, a], t^+, t^-) 
}
\eeq

Denote by $\bar A'_\circ\subset \bar A'$ the open complement of  coincidences between any of the tautological sections $\sigma_i^\pm$ and the canonical section $\xi$, and similarly denote by $\bar \calZ'_\circ = \bar \calZ' \times_{\bar A'} \bar A'_\circ \to \bar A'_\circ$ the base-change of the family to this open complement.

Passing to quasi-maps, we obtain extended moduli
\beq
\xymatrix{
q:\QM_{G, X}(\bar \calZ'/\bar A' , \sigma)_\bbR \ar[r] & \bar A'(\bbR) 
}
\eeq
\beq
\xymatrix{ 
r:\QM_{G, X}(\bar \calZ'_\circ/\bar A'_\circ , \sigma, \xi)_\bbR\ar[r] & \bar A'_\circ(\bbR) 
}
\eeq

\subsubsection{Generic trivialization}
We fix here a   trivialization of the extended family
\beq
\xymatrix{
\bar \calZ' = Z' \times \bar A^+ \times \bar A^- 
\ar[r] & \bbG_m  \times \bar A^+  \times \bar A^- =  \bar A' 
}
\eeq

Let $\bbP_z^1= \on{Proj}(\bbC[z_0, z_1])$ be the projective line with local
coordinate $z= z_1/z_0$. Equip it with the usual conjugation $c(z) = \bar z$.

For any parameter $a\in \bbG_m$, introduce the coordinate changes
\beq
\xymatrix{
x_a:\bbP^1_z \ar[r]^-\sim & \bbP^1_x
&
\displaystyle x_a(z) = a \frac{z-ia}{z+ia}
}
\eeq 
\beq
\xymatrix{
y_a:\bbP^1_z \ar[r]^-\sim & \bbP^1_y
&
\displaystyle y_a(z) = a \frac{z+ia}{z-ia}
}
\eeq 
characterized by the assignments 
\beq
\xymatrix{
x_a(0) = -a
&
x_a(\infty) = a
&
x_a(ia) = 0
}
\eeq
\beq
\xymatrix{
y_a(0) = -a
&
y_a(\infty) = a
&
y_a(-ia) = 0
}
\eeq

We have the isomorphism of parameter bases
\beq
\xymatrix{
\kappa:\bbG_m \times \bar A^+ \times \bar A^- \ar[r]^-\sim &  \bbG_m \times \bar A^+ \times \bar A^- = \bar A'
}
\eeq
\beq
\xymatrix{
\kappa(a, z^+, z^-) = (a, x_a(z)^{-1}, y_a(z)^{-1})
}
\eeq 
Note that $\kappa$ intertwines the twisted conjugation $c(a, z^+, z^-) = (\bar a, \bar z^-,\bar z^+)$ with the 
twisted conjugation $c_{\bar A'}(a, t^+, t^-) = (\bar a, \bar t^-, \bar t^+)$.

Going further, we have the trivialization of the family of curves
\beq
\xymatrix{
K: \bbG_m \times \bbP^1_z \times \bar A^+ \times \bar A^- \ar[r]^-\sim &  Z' \times \bar A^+ \times \bar A^- =  \bar \calZ'
}
\eeq
\beq
\xymatrix{
K(a, z, z^+, z^-) = (a, x_a(z), y_a(z), x_a(z)^{-1}, y_a(z)^{-1})
}
\eeq 
Note that $K$ intertwines the twisted conjugation $c(a, z, z^+, z^-) = (\bar a, \bar z, \bar z^-, \bar z^-)$ with the twisted conjugation $c_{\bar \calZ'}(a, x, y, t^+, t^-) = 
(\bar a, \bar y, \bar x, \bar t^-, \bar t^+)$. 

We have a commutative diagram
\beq
\xymatrix{
\ar[d]\bbG_m \times \bbP^1_z \times \bar A^+ \times \bar A^- \ar[r]^-K & Z' \times \bar A^+ \times \bar A^- =  \bar \calZ'\ar[d]\\
\bbG_m \times \bar A^+ \times \bar A^- \ar[r]^-\kappa &  \bbG_m \times \bar A^+ \times \bar A^- = \bar A'
}
\eeq
and similarly, a commutative diagram
\beq
\xymatrix{
\bbG_m \times \bbP^1_z \times \bar A^+ \times \bar A^- \ar[r]^-K & Z' \times \bar A^+ \times \bar A^- =  \bar \calZ'\\
\ar[u]^-{\tau = (\tau^+, \tau^-)}\bbG_m \times \bar A^+ \times \bar A^- \ar[r]^-\kappa &  \bbG_m \times \bar A^+ \times \bar A^- = \bar A'
\ar[u]_-{\sigma = (\sigma^+, \sigma^-)}
}
\eeq
of tautological sections 
\beq
\xymatrix{
\tau_i^\pm(a, z^\pm) = (a, z_i^\pm, z^\pm) 
 & i = 1, \ldots, m
}
\eeq
\beq
\xymatrix{
\sigma_i^+(a, t^+) = (a,  [t_i^+, 1], [1, t_i^+ a^2], t^+) 
& i = 1, \ldots, m
}
\eeq
\beq
\xymatrix{
\sigma_i^-(a, t^-) = (a, [1, t_i^- a^2], [t_i^-, 1], t^-) 
 & i = 1, \ldots, m
}
\eeq

Finally, we have a commutative diagram
\beq
\xymatrix{
\bbG_m \times \bbP^1_z \times \bar A^+ \times \bar A^- \ar[r]^-K & Z' \times \bar A^+ \times \bar A^- =  \bar \calZ'\\
\ar[u]^-{\zeta}\bbG_m \times \bar A^+ \times \bar A^- \ar[r]^-\kappa &  \bbG_m \times \bar A^+ \times \bar A^- = \bar A'
\ar[u]_-{\xi}
}
\eeq
of  canonical sections
\beq
\xymatrix{
\zeta(a, z^+, z^-) = (a, \infty, t^+, t^-)  &   \xi(a, t^+, t^-) = (a, a, a, t^+, t^-) 
}
\eeq

Passing to quasi-maps,  we immediately obtain the following.

\begin{lemma}
Pullback  induces isomorphisms
\beq
\xymatrix{
K^*:\QM_{G, X}(\bar \calZ'/\bar A' , \sigma)_\bbR \ar[r]^-\sim & \QM_{G, X}((\bbP^1_z \times \bar A')/\bar A' , \tau)_\bbR 
}
\eeq
\beq
\xymatrix{ 
K^*:\QM_{G, X}(\bar \calZ'_\circ/\bar A'_\circ , \sigma, \xi)_\bbR\ar[r]^-\sim & \QM_{G, X}((\bbP^1_z \times \bar A_\circ')/\bar A_\circ' , \tau, \zeta)_\bbR 
}
\eeq
\end{lemma}

Since we have achieved
 independence 
from the parameters $\bbG_m = \{a \not = 0 \}\subset \bbA^1_a$, let us remove them from the discussion.
 Introduce the notation $B^\pm = \bar A^\pm$, $B = B^+ \times B^-$, $\calX = \bbP^1_z \times B$,
along with $B^\pm_\circ = \bar A_\circ^\pm$, $B_\circ = B^+_\circ \times B^-_\circ$,
$\calX_\circ  = \calX \times_B B_\circ$.
Note the evident isomorphisms due to the independence from the parameters 
\beq
\xymatrix{
\QM_{G, X}((\bbP^1_z \times \bar A')/\bar A' , \tau)_\bbR \simeq 
\QM_{G, X}(\calX/B, \tau)_\bbR \times \bbR^\times
}
\eeq
\beq
\xymatrix{ 
 \QM_{G, X}((\bbP^1_z \times \bar A_\circ')/\bar A_\circ' , \tau, \zeta)_\bbR 
\simeq 
 \QM_{G, X}(\calX_\circ/B_\circ, \tau, \zeta)_\bbR \times \bbR^\times
 }
\eeq

The preceding lemma thus implies:
\begin{corollary}\label{c:generic fiber}
Pullback  induces isomorphisms
\beq
\xymatrix{
K^*:\QM_{G, X}(\bar \calZ'/\bar A' , \sigma)_\bbR \ar[r]^-\sim & 
\QM_{G, X}(\calX/B, \tau)_\bbR \times \bbR^\times
}
\eeq
\beq
\xymatrix{ 
K^*:\QM_{G, X}(\bar \calZ'_\circ/\bar A'_\circ , \sigma, \xi)_\bbR\ar[r]^-\sim & 
 \QM_{G, X}(\calX_\circ/B_\circ, \tau, \zeta)_\bbR \times \bbR^\times
}
\eeq
\end{corollary}


\section{Generic fiber}

In this section, we study the quasi-map moduli arrived at in Corollary~\ref{c:generic fiber} immediately above.
For the reader's convenience, let us recall the  setup.

Let $\bbP^1= \on{Proj}(\bbC[z_0, z_1])$ be the projective line with local
coordinate $z= z_1/z_0$. Let $\bbA^1 = \Spec(\bbC[z]) \simeq \bbP^1 \setminus \{\infty\}$ be the affine line.
Equip them with the usual conjugation $c(z) = \bar z$.

Fix $n=2m$. 
Consider the product of projective spaces
\beq
\xymatrix{
B = B^+ \times B^- 
&
B^\pm = (\bbP^1)^m
}
\eeq
with 
coordinates $(z^+, z^-)$ where $z^\pm = (z^\pm_1, \ldots, z^\pm_m)$.
Equip $B$ with the twisted conjugation $c_B(z^+, z^-) = (\bar z^-, \bar z^+)$. Projection to the $+$-factor provides an identification of real points
\beq
\xymatrix{
B(\bbR) \simeq B^+(\bbC) \simeq (\bbC\bbP^1)^m
}
\eeq

Consider the constant family of  projective lines 
\beq
\xymatrix{
p:\calX=\bP^1\times B \ar[r] &  B
}
\eeq
with the tautological sections
\beq
\xymatrix{
\tau=(\tau^+, \tau^-):A \ar[r] & \calX^{2m}
}
\eeq
\beq
\xymatrix{
\tau_i^\pm(z^\pm) = (z_i^\pm, z^\pm) 
 & i = 1, \ldots, m
}
\eeq
Equip $\calX$ with the twisted conjugation $c_\calX(z, z^+, z^-) = (\bar z, \bar z^-, \bar z^+)$,
 Projection to the first and $+$-factors provides an identification of real points
\beq
\xymatrix{
\calX(\bbR) \simeq \bbP^1(\bbR) \times A^+(\bbC) \simeq  \bbR\bbP^1 \times (\bbC\bbP^1)^m 
}
\eeq

Let $\zeta:A\to \calX$ be the constant section $\zeta(z^+, z^-) = (\infty, z^+, z^-)$.

Denote by 
\beq
B^\pm_\circ= (\bbA^1)^m \subset (\bbP^1)^m = B^\pm
\eeq
the open complement of the coincidences
$\tau^\pm_i(z^\pm) = \zeta(z^+, z^-) = \infty$, for $i=1, \ldots, m$. 
Set 
\beq
B_\circ = B^+_\circ \times B^-_\circ 
=(\bbA^1)^m \times (\bbA^1)^m
\subset (\bbP^1)^m \times (\bbP^1)^m  =  B
\eeq
 Introduce  the base-change
\beq
\xymatrix{
p:\calX_\circ = \calX\times_{B} B_\circ\ar[r] & B_\circ 
}\eeq
and note, by construction, the tautological sections 
\beq
\xymatrix{
\tau=(\tau^+, \tau^-):B_\circ \ar[r] & \calX_\circ^{2m}
}
\eeq
 do not intersect the constant section  
\beq
\xymatrix{
\zeta:B_\circ \ar[r] & \calX_\circ 
}\eeq

With the above choices fixed, we will study the real points of the  ind-stacks of quasimaps with their natural projections
\beq
\xymatrix{
q:\QM_{G, X}(\calX/B , \tau)_\bbR \ar[r] & B(\bbR) \simeq B^+(\bbC) \simeq (\bbC\bbP^1)^m 
}
\eeq
\beq
\xymatrix{ 
r:\QM_{G, X}(\calX_\circ/B_\circ , \tau, \zeta)_\bbR\ar[r] & B_\circ(\bbR) \simeq B_\circ^+(\bbC) \simeq \bbC^m 
}
\eeq

}

\subsection{Complex groups}
We specialize here our prior constructions to the distinguished case of complex groups.

Recall $\delta = \theta \circ \eta = \eta \circ \theta$ denotes the Cartan conjugation of $G$ with compact real form $G_c$. 
Equip $G\times G$ with the swap involution $\sw(g,h)=(h, g)$ and 
the conjugation $ \sw_\delta(g,h)=(\delta(h),\delta(g))$. The fixed-point subgroup of $\sw$ is the diagonal  $G\subset G\times G$, and the corresponding symmetric space is  isomorphic to the group
$G\backslash(G\times G)\is G$.

\begin{lemma}\label{uniformization for real QM}
The uniformization morphisms 
\eqref{eq:qm almost unif real}, \eqref{eq:qm unif real} are isomorphisms:
\beq
\xymatrix{
G[\calZ',\sigma]_\bbR \bs \Gr_{G\times G,\calZ',\sigma,\bbR}\times_{B'}B'(\bbR)
\ar[r]^-\sim &  \QM_{G\times G,G}(\calZ'/B', \sigma)_\bbR  
}
\eeq
\beq
\xymatrix{
G[\calZ'_\circ,\sigma,\xi]_\bbR \bs\Gr_{G\times G,\calZ',\sigma,\bbR}\times_{B'}B'_\circ(\bbR)
\ar[r]^-\sim &  \QM_{G\times G,G}(\calZ'_\circ/B'_\circ, \sigma,\xi)_\bbR  
}
\eeq
\end{lemma}
\begin{proof}
Note that the above uniformization morphisms over a base point
$b\in B'$ (resp. $b\in B'_\circ$) are the (multipoint version) of the real or complex uniformization morphisms in \cite[Section 6.3]{CN1}.
Since the fixed-point subgroup $G\subset G\times G$ is connected and 
$H^1(\Gal(\bC/\bbR),G\times G)$ is trivial for the Galois-action given by $\sw_\delta$, it follows from 
 \cite[Remark 5.8 and Lemma 6.1]{CN1}
that any real bundles on the curve $\calZ_b(\bbR)\is\mathbb{RP}^1$ (associated to the real form $\sw_\delta$ of $G\times G$) admit either  real or complex uniformizations. The lemma follows by standard arguments.
\end{proof}


Next, the projection maps $\pr_1,\pr_2:G\times G\to G$, $\pr_i(g_1, g_2) = g_i$,
provide an isomorphism 
\beq\label{e:factorization}
\xymatrix{
\Gr_{G\times G,\calZ,\sigma}\ar[r]^-\sim & \Gr_{G,\calZ,\sigma}\times_B\Gr_{G,\calZ,\sigma}
}\eeq
The conjugations $c_\calZ$ of $\calZ$ and $\sw_\delta$ of $G \times G$ together  induce a conjugation of $\Gr_{G\times G,\calZ,\sigma}$ which,
 under the isomorphism~\eqref{e:factorization}, is given by the map on pairs of bundles with sections 
\beq
\xymatrix{
 (\mE,s,\mE',s')\ar@{|->}[r] & ( c_\calZ^*\mE'_\delta,c_\calZ^*(s'),
c_\calZ^*\mE_\delta,c_\calZ^*(s))
}
\eeq
Thus the isomorphism \eqref{e:factorization} followed by $\pr_1$
provides 
an isomorphism 
\beq\label{real}
\xymatrix{
\Gr_{G\times G,\calZ,\sigma,\bbR}\ar[r]^-\sim & \Gr_{G,\calZ,\sigma} \times_B B(\bbR)
}
\eeq
of real analytic spaces over $B(\bbR) \simeq \bbP^1(\bbR) \times_{\bbP^1(\bbC)} B^+(\bbC)$.

Thus the preceding lemma has the following consequence.

\begin{corollary}\label{c:uniformization for real QM}
There are natural isomorphisms
\beq
\xymatrix{
G[\calZ',\sigma]_\bbR\bs(\Gr_{G,\calZ',\sigma}\times_{B'} B'(\bbR)) 
\ar[r]^-\sim &   \QM_{G\times G,G}(\calZ'/B', \sigma)_\bbR  
}
\eeq
\beq
\xymatrix{
G[\calZ'_\circ,\sigma,\xi]_\bbR\bs(\Gr_{G,\calZ'_\circ,\sigma}\times_{B'_\circ} B'_\circ(\bbR))
\ar[r]^-\sim & 
  \QM_{G\times G,G}(\calZ'_\circ/B'_\circ, \sigma,\xi)_\bbR  
}
\eeq
\end{corollary}

Next, consider the natural map
\beq\label{e:extension}
\xymatrix{
 \Gr_{G,\calZ, \sigma^+}  \ar[r] &  \Gr_{G,\calZ, \sigma} 
}
\eeq
given by restricting
a trivialization of a $G$-bundle defined away from the section $\sigma^+$ to the complement of both sections  $\sigma^+$ and  $\sigma^-$.

We will find open loci where the map~\eqref{e:extension} induces an isomorphism.

First, consider the restriction of \eqref{e:extension} to the generic family
\beq\label{e:gen extension}
\xymatrix{
 \Gr_{G,\calZ', \sigma^+}  \ar[r] &  \Gr_{G,\calZ', \sigma} 
}
\eeq

\begin{prop} The map \eqref{e:gen extension}
induces isomorphisms
\beq
\xymatrix{
 G_c\bs \Gr_{G,\calZ', \sigma^+} \times_{B'} B'(\bbR) \ar[r]^-\sim & G[\calZ',\sigma]_\bbR\bs (\Gr_{G,\calZ',\sigma} \times_{B'} B'(\bbR))
}
\eeq
\beq
\xymatrix{
  \Gr_{G,\calZ'_\circ, \sigma^+}  \times_{B'_\circ} B'_\circ(\bbR) \ar[r]^-\sim & G[\calZ'_\circ,\sigma, \xi]_\bbR\bs (\Gr_{G,\calZ'_\circ,\sigma}\times_{B'_\circ} B'_\circ(\bbR))
}
\eeq
\end{prop}

\begin{proof}
It suffices to establish the second with its natural $G_c$-equivariance then glue to obtain the first.
Let $b=(a,t_1^+,...,t_n^+)\in B'_\circ(\bbR)$ and let $z_1,...,z_k\in\bC$ be $k$-distinct points 
such that $\{z_1,..,z_k\}=\{t_1^+,...,t_n^+\}$.
It follows from the standard factorization property of Beilinson-Drinfeld Grassmannian
and Theorem \ref{thm: factor}  that, over the based point $b$, the second map above can be identified with the map
\beq\label{key iso}
\prod_{i=1}^k\Gr_{z_i}\to\prod_{i=1}^k(\Gr_{z_i}\times\Gr_{\frac{1}{a^2\bar z_i}})\to\prod_{i=1}^k\Omega_{z_i,a^{-1}} G_c\backslash(\Gr_{z_i}\times\Gr_{\frac{1}{a^2\bar z_i}})
\eeq
where the first map is the left copy embedding sending
$\gamma$ to $(\gamma,e)$, where $e$ is the base point of 
$\Gr_{\frac{1}{a^2\bar z_i}}$, 
and the second map is the natural quotient map.
(Note that in terms of the local coordinate $z_i$ the complex conjugation on $\mathcal Z_b\is\mathbb P^1_{z_i}$
is given by $z_i\to \frac{1}{a^2\bar z_i}$.)
Thus the assertion follows from the fact that 
the based loop group $\Omega_{z_i,a^{-1}} G_c$
acts freely on $\Gr_{z_i}$ and $\Gr_{\frac{1}{a^2\bar z_i}}$.
\end{proof}

\begin{corollary}\label{c:gen unif}
There are natural isomorphisms
\beq
\xymatrix{
 G_c\bs \Gr_{G,\calZ', \sigma^+} \times_{B'} B'(\bbR)
\ar[r]^-\sim &   \QM_{G\times G,G}(\calZ'/B', \sigma)_\bbR  
}
\eeq
\beq\label{e:gen unif}
\xymatrix{
 \Gr_{G,\calZ'_\circ, \sigma^+}  \times_{B'_\circ} B'_\circ(\bbR) 
\ar[r]^-\sim & 
  \QM_{G\times G,G}(\calZ'_\circ/B'_\circ, \sigma,\xi)_\bbR  
}
\eeq
\end{corollary}

\quash{
\begin{remark}
The above isomorphisms for the generic family $\calZ'\to A'$ naturally extend to the universal family $\calU \to U$.
Thus there are natural isomorphisms
\beq
\xymatrix{
 G_c\bs \Gr_{G,\calU, \sigma^+} \times_{U} U(\bbR)
\ar[r]^-\sim &   \QM_{G\times G,G}(\calU/U, \sigma)_\bbR  
}
\eeq
\beq
\xymatrix{
 \Gr_{G,\calU_\circ, \sigma^+}  \times_{U_\circ} U_\circ(\bbR) 
\ar[r]^-\sim & 
  \QM_{G\times G,G}(\calU_\circ/U_\circ, \sigma,\xi)_\bbR  
}
\eeq
extending those of the corollary.
\end{remark}
}

Next, let 
$\Bun_G^0(\calZ/B)\subset \Bun_G(\calZ/B)$ denote the open sub-stack of 
a point $b\in B$ and a  
trivializable 
$G$-bundle on $\calZ_b$.
Denote by 
\beq
\xymatrix{
QM^0_{G\times G,G}(\calZ/B,\sigma)_\bbR \simeq G_c \backslash  \Gr_{G,\calZ,\sigma^+}^0
&
QM^0_{G\times G,G}(\calZ_\circ/B_\circ,\sigma,\xi)_\bbR \simeq \Gr_{G,\calZ_\circ^+,\sigma^+}^0
}
\eeq
 the 
base-changes
to $\Bun_G^0(\calZ/B)\subset \Bun_G(\calZ/B)$.
Consider the restriction of \eqref{e:extension} to the trivial bundle locus
\beq\label{e:triv extension}
\xymatrix{
 \Gr^0_{G,\calZ, \sigma^+}  \ar[r] &  \Gr^0_{G,\calZ, \sigma} 
}
\eeq

\begin{prop} The map \eqref{e:triv extension}
induces isomorphisms
\beq
\xymatrix{
 G_c\bs \Gr^0_{G,\calZ, \sigma^+} \times_{B} B(\bbR) \ar[r]^-\sim & G[\calZ,\sigma]_\bbR\bs (\Gr^0_{G,\calZ,\sigma} \times_{B} B(\bbR))
}
\eeq
\beq
\xymatrix{
  \Gr^0_{G,\calZ_\circ, \sigma^+}  \times_{B_\circ} B_\circ(\bbR) \ar[r]^-\sim & G[\calZ_\circ,\sigma, \xi]_\bbR\bs (\Gr^0_{G,\calZ_\circ,\sigma}\times_{B_\circ} B_\circ(\bbR))
}
\eeq
\end{prop}

\begin{proof}
By Proposition~\eqref{e:gen extension}, it suffices to establish the maps are isomorphisms at the special fiber $b=(a,t_1^+,...,t_n^+)\in B(\bbR)$
with $a=0$.
Moreover, it suffices to establish the second with its natural $G_c$-equivariance then glue to obtain the first.

Now it is elementary to see we have a natural isomorphism  at the special fiber
\beq\label{factorization at 0}
\xymatrix{
\Gr_{G,\calZ_\circ,\sigma}\times_{B_\circ} B_\circ(\bbR)|_{b}
\ar[r]^-\sim & 
\Gr_{G,\calZ_\circ,\sigma^+,b}\times \Gr_{G,\calZ_\circ,\sigma^-,b} 
}
\eeq
And similarly, we have a natural isomorphism of groups at the special fiber
\beq
\xymatrix{
G[\calZ_\circ,\sigma, \xi]_\bbR|_{b} \ar[r]^-\sim &  \Delta_\bbR \subset G[\bbP^1, \sigma^+,\xi,b ] \times G[\bbP^1, \sigma^-,\xi,b]
}
\eeq
where $\Delta_\bbR$ denotes the conjugate diagonal. 
Thus the assertion follows from the facts that the isomorphism~\eqref{factorization at 0} restricts to
an isomorphism 
\beq\label{open locus}
\xymatrix{
\Gr^0_{G,\calZ_\circ,\sigma}\times_{B_\circ} B_\circ(\bbR)|_{b}
\ar[r]^-\sim & 
\Gr^0_{G,\calZ_\circ,\sigma^+,b}\times  \Gr^0_{G,\calZ_\circ,\sigma^-,b} 
}
\eeq
and the action of $G[\bbP^1, \sigma^+, \xi,b]$
on $\Gr^0_{G,\calZ_\circ,\sigma^+,b}$ is free and transitive.
\end{proof}

\begin{corollary}\label{c:triv unif}
There are natural isomorphisms
\beq
\xymatrix{
  G_c\bs \Gr^0_{G,\calZ, \sigma^+} \times_{B} B(\bbR)
\ar[r]^-\sim &   \QM^0_{G\times G,G}(\calZ/B, \sigma)_\bbR  
}
\eeq
\beq\label{e:specical unif}
\xymatrix{
  \Gr^0_{G,\calZ_\circ, \sigma^+}  \times_{B_\circ} B_\circ(\bbR)\ar[r]^-\sim & 
  \QM^0_{G\times G,G}(\calZ_\circ/B_\circ, \sigma,\xi)_\bbR  
}
\eeq
\end{corollary}

\begin{remark}
The isomorphisms of Cor 5.5 and 5.8 coincide on the intersections of their domains.
\end{remark}

\subsection{Stratification}

Recall the Beilinson-Drinfeld grassmannian $\Gr^{(m)}_{}\to(\bP^1)^m$ of 
a $G$-bundle $\mE$ on $\bP^1$, a point $(z_1,...,z_m)\in(\bP^1)^m$, and a section 
$s:\bP^1_{}\setminus\{z_1,...,z_m\}\to\mE$.
We denote by
$\Gr^{(m),0}\subset\Gr^{(m)}$ the open subset consisting of $(\mE,z_1,...,z_m,s)$
such that $\mE$ is trivializable.

First, let us
stratify the base $(\bP^1)^m$ by coincidences among points. For any partition $\fp$ of the set $\{1, \ldots, m\}$, denote by $(\bP^1)^\fp \subset (\bP^1)^m$ the locus where $z_i=z_j$ if and only if $i$ and $j$ are in the same part of $\frakp$. This provides a Whitney stratification of $(\bP^1)^m$.

Next, for any partition $\frakp$ of the set $\{1, \ldots, m\}$, and any map $\lambda_\frakp:\frakp\to \Lambda_T^+$, denote by 
$
\calS^{\lambda_\frakp}_{} \subset \Gr^{(m)} 
$ (resp. $\calS^{\lambda_\frakp,0}_{} \subset \Gr^{(m),0} $)
the spherical stratum of a $G$-bundle $\mE$ (resp. a trivializable $G$-bundle $\mE$),
a point $(z_1,...,z_m) \in(\bbP^1)^\frakp$,
and a section $s:\bbP^1\setminus\{ z_1,\ldots, z_m\}\to\mE$ of modification type $\lambda_\fp$. 
This provides a Whitney stratification of $\Gr_{}^{(m)}$ (resp. $\Gr^{(m),0}$)
compatible with that of $(\bbP^1)^m$.

Note that 
the coordinate 
$x^{-1}$ provides an isomorphism
\beq\label{triv}
(\bbR\times\Gr^{(m)})\times_{\bbR\times(\bP^1)^m}B(\bbR)\is\Gr_{G,\calZ,\sigma^+}\times_BB(\bbR)
\eeq 
here we identify $B(\bbR)\stackrel{\eqref{B(R)}}\is\bbR\times \bC^m
\subset\bbR\times(\bP^1)^m$. 
The isomorphism above 
restricts to an isomorphism 
\beq\label{triv B}
(\bbR\times\Gr^{(m),0})\times_{\bbR\times(\bP^1)^m}B(\bbR)\is\Gr^0_{G,\calZ,\sigma^+}\times_BB(\bbR)
\eeq 
between the corresponding open loci 
and, by 
Corollary~\ref{c:gen unif} and~\ref{c:triv unif}, we obtain:
\beq\label{e:gen triv quasi}
(\bbR\times\Gr^{(m)})\times_{\bbR\times(\bP^1)^m}B'_\circ(\bbR)\is QM_{G\times G,G}(\calZ'_\circ/B'_\circ,\sigma,\xi)_\bbR,
\eeq
\beq\label{e:triv triv quasi}
(\bbR\times\Gr^{(m),0})\times_{\bbR\times(\bP^1)^m}B_\circ(\bbR)\is QM^0_{G\times G,G}(\calZ_\circ/B_\circ,\sigma,\xi)_\bbR.
\eeq

Let us equip $\bbR\times\Gr^{(m)}$ (resp. $\bbR\times\Gr^{(m),0}$) with the product stratification 
$\{\bbR\times\calS^{\lambda_\frakp}\}$ (resp. $\{\bbR\times\calS^{\lambda_\frakp,0}\}$) and 
transport the spherical strata $
\bbR\times\calS^{\lambda_\frakp} \subset \bbR\times\Gr^{(m)}_{} 
$ (resp. $
\bbR\times\calS^{\lambda_\frakp,0} \subset \bbR\times\Gr^{(m),0}_{} 
$) across  the isomorphisms  \eqref{e:gen triv quasi} and \eqref{e:triv triv quasi}
and denote the resulting strata by 
\beq
\xymatrix{
\calS^{\lambda_\frakp}_{\calZ, \bbR} \subset \QM_{G\times G,G}(\calZ'_\circ/B'_\circ, \sigma,\xi)_\bbR
&
(\text{resp.}\ \ \  \calS^{\lambda_\frakp,0}_{\calZ, \bbR} \subset \QM^0_{G\times G,G}(\calZ_\circ/B_\circ, \sigma, \xi)_\bbR).
}\eeq

Note that $\calS_{\calZ, \bbR}^{\lambda_\frakp, 0}$ and $\calS_{\calZ,\bbR}^{\lambda_\frakp, 0}$ are non-empty
if and only if the total coweight $|\lambda_\fp|\in \Lambda_T^+$, given by summing the values of $\lambda_\fp$ over the parts of $\fp$, in fact lies in $R^+_G \subset  \Lambda_T^+$.

\quash{
Let us summarize the above discussion in the following proposition

\begin{prop}
\begin{enumerate}
\item
The stratified map 
\[\xymatrix{
QM_{G\times G,G}(\calZ'_\circ/B'_\circ, \sigma,\xi)_\bbR\ar[r]&B_\circ'}
\]
where we equip $QM_{G\times G,G}(\calZ'_\circ/B'_\circ, \sigma,\xi)_\bbR$ and 
$B_\circ'$ with the stratifications $\{\calS_{\calZ,\bbR}^{\lambda_\frakp}\}$ and $\{(B_\circ')^{\frakp}\}$,
admits  a real analytic trivialization over $\bbR^\times$. 
\item
The stratified map
\[\xymatrix{
QM^0_{G\times G,G}(\calZ_\circ/B_\circ, \sigma,\xi)_\bbR\ar[r]&B_\circ}
\]
where we equip $QM^0_{G\times G,G}(\calZ_\circ/B_\circ, \sigma,\xi)_\bbR$ and 
$B_\circ$ with the stratifications $\{\calS_{\calZ,\bbR}^{\lambda_\frakp,0}\}$ and $\{B_\circ^\frakp\}$,
admits  a real analytic trivialization over $\bbR$.
\end{enumerate}
\end{prop}

}

\subsection{Symmetry} 
We continue here with the distinguished case of complex groups. We will describe an involution of quasi-maps.

\begin{definition}[Swap involution $\delta_\calZ$]
The conjugation $\delta \times \delta$ of $G\times G$ commutes with the conjugation $\sw_\delta$. Hence together with the conjugation 
$c_\calZ$ of $\calZ$ it
induces a fiberwise involution of $\QM_{G\times G,G}(\calZ/B,\sigma)_\bbR$ and 
$\QM_{G\times G,G}(\calZ_\circ/B_\circ,\sigma,\xi)_\bbR$ denoted by  $\delta_\calZ$.
\end{definition}

\quash{
\begin{definition}[Curve involution $\beta_\calZ$]
 The family of curves $\calZ\to B$ has a natural fiberwise involution 
$\beta_\calZ$ defined by  $\beta_\calZ(a, x, y, t^+, t^-) = (a, y, x, t^-, t^+)$.
It induces a fiberwise involution of   $\QM_{G\times G,G}(\calZ/B,\sigma)_\bbR$
and $\QM_{G\times G,G}(\calZ_\circ/B_\circ,\sigma, \xi)_\bbR$ denoted by $\beta_\calZ$. 
\end{definition}

The two involutions evidently commute.

\begin{lemma}
The involutions $(\delta_\calZ, \beta_\calZ)$ provide a $\bbZ/2 \times \bbZ/2$-action.
\end{lemma}
}

In the remainder of this section, we will give concrete descriptions of how the involution
$\delta_\calZ$
 act at the generic and special fibers in terms of our prior uniformizations.

\quash{
Consider the natural map
\[v:\Gr_{G,\calZ_\circ,\sigma^+}\stackrel{\eqref{e:extension}}\lra\Gr_{G,\calZ_\circ,\sigma}\lra QM_{G\times G,G}(\calZ_\circ/B_\circ,\sigma,\xi)_\bbR\]
where the last map is the uniformization morphism.
For any $b=(a,t^+_1,...,t^+_m)\in B_\circ(\bbR)\subset B(\bbR)\is\bbR\times\bC^m$
let 
\[v_b:\Gr_{G,\calZ_\circ,\sigma^+,b}\to QM_{G\times G,G}(\calZ_\circ/B_\circ,\sigma,\xi,b)_\bbR\]
be the base change of $v$ to $b$.
By Corollary \ref{} and \ref{}, the map above induces isomorphisms

\beq\label{a neq 0}
v_b:\Gr_{G,\calZ_\circ,\sigma^+,b}\is QM_{G\times G,G}(\calZ_\circ/B_\circ,\sigma,\xi,b)_\bbR,\ \ \text{if}\ \ b=(a\neq0,t^+_1,...,t^+_m)\in B_\circ'(\bbR),
\eeq

\beq\label{a=0}
v_b:\Gr^0_{G,\calZ_\circ,\sigma^+,b}\is QM^0_{G\times G,G}(\calZ_\circ/B_\circ,\sigma,\xi,b)_\bbR,\ \ \text{if}\ \ b=(0,t^+_1,...,t^+_m).
\eeq
}

\subsubsection{Generic fiber}
Let
$b=(a\neq 0,t^+_1,...,t^+_m)\in B'_\circ(\bbR)$.
Let $z_1,...,z_k\in\bC$ be the distinct $k$ points such that there is an equality of sets
$\{z_1,...,z_k\}=\{t^+_1,...,t^+_m\}$
and consider the isomorphism
\beq\label{gen v_b}
v_b:\Gr_{z_1}\times\cdot\cdot\cdot\times\Gr_{z_k}\is
(\bbR\times\Gr^{(m)})|_{b}\stackrel{\eqref{triv}}\is
\Gr_{G,\calZ,\sigma^+,b}\stackrel{\eqref{e:gen unif}}\is QM_{G\times G,G}(\calZ_\circ/B_\circ,\sigma,\xi,b)_\bbR.
\eeq
Here the first map is the factorization isomorphism.
For $i=1, \ldots, k$, we define 
\[v_{i}:\Gr_{z_i}\hookrightarrow\Gr_{z_1}\times\cdot\cdot\cdot\times\Gr_{z_k}\stackrel{v_b}\is QM_{G\times G,G}(\calZ_\circ/B_\circ,\sigma,\xi,b)_\bbR\]

\begin{prop}\label{action at gen fiber}

For $i=1, \ldots, k$,  the involution $\delta_\calZ$ satisfies:
\begin{enumerate}
\item
When $z_i\in S^1_{a^{-1}}=\{|z|=|a^{-1}|\}\subset\bP^1$, we have
\beq\label{v and delta}
\delta_\calZ\circ v_i=
v_i\circ\delta
\eeq
where 
$\delta$ is the conjugation on $\Gr_{z_i}$ induced by the conjugations 
$c(z)=\frac{1}{a^2\bar z}$ on $\bP^1$ and $\delta$ on $G$.

\item

When $z_i\in\bbC\setminus S^1_{a^{-1}}$, we have
\beq\label{v and delta}
\xymatrix{
\delta_\calZ\circ v_i=
v_i\circ\on{inv}
}
\eeq
where 
$\on{inv}(g)(z) = g(z)^{-1}$ is the group-inverse on 
$\Omega_{z_i,a^{-1}}G_c\is\Gr_{z_i}$.

\end{enumerate}
\end{prop}

\begin{proof}
We first note that 
the  isomorphism \[\prod_{i=1}^k
\Omega_{z_i,a^{-1}}G_c\backslash(\Gr_{z_i}\times\Gr_{c(z_i)})\is
G[\calZ'_\circ,\sigma, \xi,b]_\bbR\bs \Gr_{G,\calZ'_\circ,\sigma,b}\is QM_{G\times G,G}(\calZ_\circ/B_\circ,\sigma,\xi,b)_\bbR\]
 intertwines the 
conjugation $\delta_\calZ$ with 
the one $\delta_{\calZ}'$ on the product $\prod_{i=1}^k\Omega_{z_i,a^{-1}}G_c\backslash(\Gr_{z_i}\times\Gr_{c(z_i)})$ 
given by 
\[\delta_{\calZ}'(\gamma_{i,1},\gamma_{i,2})= (\gamma'_{i,1},\gamma'_{i,2}),\ \ \ i=1,...,k\] where 
$\gamma_{i,j}'=\delta(\gamma_{i,j})$, $j=1,2$, if $z_i=c(z_i)\in S^1_{a^{-1}}$, otherwise 
$\gamma_{i,1}'=\delta(\gamma_{i,2})$ 
and $\gamma_{i,2}'=\delta(\gamma_{i,1})$
if $z_i\neq c(z_i)$. 
Here $\delta$ is the map $\delta:\Gr_{z_i}\to\Gr_{c(z_i)}$ induced by the complex conjugation
$c(z)=\frac{1}{a^2\bar z}$ on $\bP^1$ and $\delta$ on $G$.

On the other hand, 
a direct computation shows that the isomorphism
\[\prod_{i=1}^k\Gr_{z_i}\is\prod_{i=1}^k\Omega_{z_i,a^{-1}}G_c\backslash(\Gr_{z_i}\times\Gr_{c(z_i)})\]
induced by the left copy embedding  $\gamma\to (\gamma,e)$  intertwines the 
map $\delta_\calZ'$ above with the one $\delta''_{\calZ}$ on $\Gr_{z_i}$ given by $\delta_\calZ''(\gamma)=\delta(\gamma)$  when $z_i=c(z_i)$
and $\delta_\calZ''(\gamma)=\gamma^{-1}$ is
 the group-inverse on 
$\Omega_{z_i,a^{-1}}G_c\is\Gr_{z_i}$ when $z_i\neq c(z_i)$.

To deduce the proposition, we note that the map $v_b$ in~\eqref{gen v_b}
 is equal to the composition of the above two isomorphisms.
\end{proof}

\subsubsection{Special fiber}
Let $b=(0,t_1^+,...,t_m^+)\in B_\circ(\bbR)$.
Consider the map
\beq\label{triv v_b}
v_b:G[\bP^1\setminus\{t_1^+,...,t_m^+\}]_{\infty\to e}\is(\bbR\times\Gr^{(m),0})|_{b}\stackrel{\eqref{triv B}}\is 
\Gr^0_{G,\calZ,\sigma^+,b}\stackrel{\eqref{e:specical unif}}\is QM^0_{G\times G,G}(\calZ_\circ/B_\circ,\sigma,\xi,b)_\bbR.
\eeq

\begin{prop}\label{action at special fiber}
The involution $\delta_\calZ$ satisfies
\[\delta_\calZ\circ v_b=v_b\circ\on{inv}\]
here $\on{inv}(g(z))=g(z)^{-1}$ is the group inverse on $G[\bP^1\setminus\{t_1^+,...,t_m^+\}]_{\infty\to e}$.
\end{prop}

\begin{proof}
We first note that the isomorphism
\[
G[\bP^1\setminus\{t_1^+,...,t_m^+\}]_{\infty\to e}\backslash(\Gr^0_{G,\calZ,\sigma^+,b}
\times\Gr^0_{G,\calZ,\sigma^-,b})\is G[\calZ'_\circ,\sigma, \xi,b]_\bbR\bs \Gr^0_{G,\calZ'_\circ,\sigma,b}\is QM^0_{G\times G,G}(\calZ_\circ/B_\circ,\sigma,\xi,b)_\bbR\]
intertwines the involution $\delta_\calZ$ with the one $\delta_\calZ'$
on $G[\bP^1\setminus\{t_1^+,...,t_m^+\}]_{\infty\to e}\backslash(\Gr^0_{G,\calZ,\sigma^+,b}
\times\Gr^0_{G,\calZ,\sigma^-,b})$ given 
by $\delta_\calZ'(\gamma_1,\gamma_2)=(\delta(\gamma_2),\delta(\gamma_1))$
(note that the group $G[\bP^1\setminus\{t_1^+,...,t_m^+\}]_{\infty\to e}$
acts on the product via the conjugate diagonal embedding 
$\gamma\to(\gamma,\delta(\gamma))$).
On the other hand, the isomorphism 
\[G[\bP^1\setminus\{t_1^+,...,t_m^+\}]_{\infty\to e}\is
\Gr^0_{G,\calZ,\sigma^+,b}
\is
G[\bP^1\setminus\{t_1^+,...,t_m^+\}]_{\infty\to e}\backslash(\Gr^0_{G,\calZ,\sigma^+,b}
\times\Gr^0_{G,\calZ,\sigma^-,b})\]
where the fist map is the action map and the second map is 
induced by the left copy embedding intertwines the 
involution $\delta_\calZ'$ with 
the group inverse on $G[\bP^1\setminus\{t_1^+,...,t_m^+\}]_{\infty\to e}$.
To deduce the proposition, we observe that the map 
$v_b$ is equal to the composition of the above two isomorphisms.
\end{proof}


\subsection{Compatibility with stratifications}
We have the following compatibility of the involution $\delta_\calZ$ with stratifications.
Let $w_0\in W$ denote the longest element of the Weyl group. 
For any partition $\fp$ of the set $\{1, \ldots, m\}$, and map $\lambda_\fp:\fp\to \Lambda_T^+$, 
we set $-w_0(\lambda_\fp) = -w_0 \circ \lambda_\fp$.

\begin{lemma}\label{stable under eta}
\begin{enumerate}
\item
The involution $\delta_\calZ$  preserves the open subspace
\beq
\QM^0_{G\times G,G}(\calZ_\circ/B_\circ,\sigma,\xi)_\bbR
\subset \QM_{G\times G,G}(\calZ_\circ/B_\circ,\sigma,\xi)_\bbR
\eeq

\item
The involution $\delta_\calZ$ restricts to a map on spherical strata
\beq
\xymatrix{
\calS_{\calZ,\bbR}^{\lambda_\fp}\ar[r]  & \calS_{\calZ,\bbR}^{-w_0(\lambda_\fp)}
}\eeq
within $\QM_{G\times G,G}(\calZ'_\circ/B'_\circ,\sigma,\xi)_\bbR$,
and also the spherical strata 
\beq
\xymatrix{
\calS_{\calZ,\bbR}^{\lambda_\fp, 0}\ar[r]  & \calS_{\calZ, \bbR}^{-w_0(\lambda_\fp), 0}
}\eeq
within $\QM^0_{G\times G,G}(\calZ_\circ/B_\circ,\sigma,\xi)_\bbR$.
\end{enumerate}
\end{lemma}
\begin{proof}
The claim follows from the facts that 
the conjugation $\delta$ of $\Gr$ maps 
 $S^\lambda$ (resp. $\Gr^0$) to
$S^{-w_0(\lambda)}$
(resp.  $\Gr^0$), and the involution $\on{inv}$ of $\Omega G_c$ maps 
  $S^\lambda$  to 
$S^{-w_0(\lambda)}$.
\end{proof}

\subsection{The involution $\eta_\calZ$}\label{eta_Z}
Recall $\theta$ denotes the Cartan involution of $G$ with fixed-point subgroup $K$,
and $\delta = \theta \circ \eta = \eta \circ \theta$ the Cartan conjugation of $G$ with compact real form $G_c$. 
Since $\eta$ and $\theta$ commute with $\delta$,
the conjugation   $\eta \times \eta$ and involution   $\theta \times \theta$ of $G\times G$ define involutions of the real moduli
$QM_{G\times G,G}(\calZ_\circ/B_\circ,\sigma,\xi)_\bbR$ 
 of rigidified quasi-maps  which we denote respectively by $\eta_\calZ$ and $\theta_\calZ$.

Propositions~\ref{action at gen fiber} and \ref{action at special fiber},
and Lemma \ref{stable under eta}
immediately imply:

\begin{proposition}\label{Z/2 action}

The involution $\eta_\calZ$ on $\QM_{G\times G,G}(\calZ_\circ/B_\circ,\sigma,\xi)_\bbR$ satisfies:

\begin{enumerate}
\item The involution commutes with the  natural $G_c$-action
and preserves the open subfamilies 
$\QM_{G\times G,G}(\calZ'_\circ/B'_\circ,\sigma,\xi)_\bbR$
and $\QM^0_{G\times G,G}(\calZ_\circ/B_\circ,\sigma,\xi)_\bbR$
and their spherical stratifications
$\{\calS^{\lambda_\frakp}_{\calZ,\bbR}\}$
and $\{\calS^{\lambda_\frakp,0}_{\calZ,\bbR}\}$.

\item
At $b=(a\neq 0,t^+_1,...,t^+_m)\in B_\circ(\bbR)$, the isomorphism
\[\xymatrix{
v_b:\Gr_{z_1}\times\cdot\cdot\cdot\times\Gr_{z_k}\ar[r]^-\sim&\QM_{G\times G,G}(\calZ_\circ/B_\circ,\sigma,\xi,b)_\bbR}\] of \eqref{gen v_b}
intertwines the involution $\delta_\calZ$ with the involution on $\Gr_{z_1}\times\cdot\cdot\cdot\times\Gr_{z_k}$ given by $(\gamma_1,...,\gamma_k)\to (\gamma_1',...,\gamma_k')$, where 
$\gamma'_i=\eta(\gamma_i)$ if $z_i\in S^1_{a^{-1}}$ and 
$\eta$ is the conjugation on $\Gr_{z_i}$ induced by 
$c(z)=\frac{1}{a^2\bar z}$ on $\bP^1$ and $\eta$ on $G$, otherwise 
$\gamma'_i=\on{inv}\circ\theta(\gamma_i)$ if $z_i\in\bC\setminus S^1_{a^{-1}}$ and 
$\on{inv}\circ\theta$ is the involution on $\Omega_{z_i,a^{-1}}G_c\is\Gr_{z_i}$.

\item 
At $b=(0,t_1^+,,,.t_m^+)\in B_\circ(\bbR)$, the isomorphism
\[v_b:\xymatrix{G[\bP^1\setminus\{t_1^+,,,.t_m^+\}]_{\infty\to e}\ar[r]^-\sim&QM^0_{G\times G,G}(\calZ_\circ/B_\circ,\sigma,\xi,b)_\bbR}\]
of \eqref{triv v_b} intertwines the involution $\eta_\calZ$ with the involution 
$\on{inv}\circ\theta(\gamma)=\theta(\gamma)^{-1}$ on 
$G[\bP^1\setminus\{t_1^+,,,.t_m^+\}]_{\infty\to e}$.

\end{enumerate}
\end{proposition}

\section{Stratified homeomorphisms}


\subsection{Trivializations}

Set $\mathbb B=[0,1]\times\bbR^m$ and $\mathbb B'=(0,1]\times\bbR^m$.

\begin{definition}
An embedding 
\[\xymatrix{\zeta:\mathbb B=[0,1]\times\bbR^m\ar[r]&B_\circ(\bbR)}\subset\bbR\times\bC^m\]
is called \emph{admissible} if it is given by  
$\zeta(a,z_1,...,z_m)=(a,f_a(z_1),...,f_a(z_m))$
where $f:[0,1]\to\on{Aut}(\bP^1)$
is 
a real analytic map 
satisfying 
$f_0(z)=z$ and $f_1(z)=\frac{z-i}{z+i}$.
\end{definition}

\begin{remark}
Each admissible $\zeta$ defines a one-parameter family of embeddings 
$\zeta_a:=\zeta|_{a\times\bbR^m}:\bbR^m\to\bbC^m$,\ $a\in[0,1]$ satisfying 
$\zeta_0(\bbR^m)=\bbR^m$ and $\zeta_1(\bbR^m)=(S^1\setminus\{1\})^m$.
\end{remark}

\begin{example}
Consider the map $f:[0,1]\to\on{Aut}(\bP^1)$ 
given by $f_a(z)=\frac{z-ai}{az+ai+(1-a)}$.
A direct computation shows that 
$f_a(z)\neq a^{-1}$ for all $a\in\bbR$,
and 
$f_0(z)=z$, $f_1(z)=\frac{z-i}{z+i}$, and hence 
the corresponding embedding 
$\zeta:\mathbb B\to B_\circ(\bbR)$ is admissible.

\end{example}

Let $\zeta:\mathbb B\to B_\circ(\bbR)$ be an admissible embedding.
Consider the following base changes
\[\xymatrix{QM_{G\times G,G}(\calZ/\mathbb B,\sigma,\xi)_\bbR:=
QM_{G\times G,G}(\calZ_\circ/B_\circ,\sigma,\xi)_\bbR\times_{B_\circ(\bbR)}\mathbb B\ar[r]&\mathbb B
\\
QM^0_{G\times G,G}(\calZ/\mathbb B,\sigma,\xi)_\bbR:=
QM^0_{G\times G,G}(\calZ_\circ/B_\circ,\sigma,\xi)_\bbR\times_{B_\circ(\bbR)}\mathbb B\ar[r]&\mathbb B
\\
QM_{G\times G,G}(\calZ'/\mathbb B',\sigma,\xi)_\bbR:=QM_{G\times G,G}(\calZ'_\circ/B'_\circ,\sigma,\xi)_\bbR\times_{B'_\circ(\bbR)}\mathbb B'\ar[r]&\mathbb B'}
\]
Then \eqref{e:gen triv quasi} and \eqref{e:triv triv quasi} 
restrict to isomorphisms 

\beq\label{e:gen iso over B}
\xymatrix{
(\bbR\times\Gr^{(m)})\times_{\bbR\times(\bP^1)^m}\mathbb B'\ar[r]^-\sim&QM_{G\times G,G}(\calZ'/\mathbb B',\sigma,\xi)_\bbR},
\eeq
\beq\label{e:triv iso over B}
\xymatrix{
(\bbR\times\Gr^{(m),0})\times_{\bbR\times(\bP^1)^m}\mathbb B\ar[r]^-\sim&QM^0_{G\times G,G}(\calZ/\mathbb B,\sigma,\xi)_\bbR}.
\eeq

Note that there is an isomorphism over $[0,1]$
\[\xymatrix{[0,1]\times(\Gr^{(m)}\times_{(\bP^1)^m}\bbR^m)\ar[r]^-\sim&(\bbR\times\Gr^{(m)})\times_{\bbR\times(\bP^1)^m}\mathbb B}\]
given by
\[(a,(\mE,z_1,...,z_m,s))=(a,(\alpha_a)_*\mE,\alpha_a(z_1),...,\alpha_a(z_m),(\alpha_a)_*s)\]
and in view of  \eqref{e:gen iso over B}, \eqref{e:triv iso over B}, we obtain 

\begin{prop}\label{trivialization}
Each admissible embedding 
$\zeta:\mathbb B\to B_\circ(\bbR)$ induces isomorphisms:
\beq\label{}
\xymatrix{
(0,1]\times(\Gr^{(m)}\times_{(\bP^1)^m}\bbR^m)\ar[r]^-\sim&QM_{G\times G,G}(\calZ'/\mathbb B',\sigma,\xi)_\bbR},
\eeq
\beq\label{}
\xymatrix{
[0,1]\times(\Gr^{(m),0}\times_{(\bP^1)^m}\bbR^m)\ar[r]^-\sim&QM^0_{G\times G,G}(\calZ/\mathbb B,\sigma,\xi)_\bbR}.
\eeq
\end{prop}
Note that the 
isomorphisms above coincide on the intersections of their domains.

\subsection{Families of involutions}

Let $p:\calG\to\bbR^m$ be the group ind-scheme over $\bbR^m$ whose fiber 
over $(z_1,...,z_m)\in\bbR^m$ is the group ind-scheme $G[\bP^1\setminus\{z_1,...,z_m\}]_{\infty\to e}$
of maps $\gamma:\bP^1\setminus\{z_1,...,z_m\}\to G$ such that $\gamma(\infty)=e$.
The acton of $\calG$ on the base point of 
$\Gr^{(m)}\times_{(\bP^1)^m}\bbR^m$ defines an isomorphism 
\beq
\xymatrix{\calG\ar[r]^-\sim&\Gr^{(m),0}\times_{(\bP^1)^m}\bbR^m}
\eeq
Consider the transported strata 
$\calG^{\lambda_\frakp}:=\calS^{\lambda_\frakp,0}\cap\calG$ of $\calG$ and equip 
$[0,1]\times\calG$ with the product stratification $\{[0,1]\times\calG^{\lambda_\frakp}\}$.
By Proposition \ref{trivialization}, we obtain 
a stratified isomorphism 
\beq\label{p_zeta}
p_\zeta:
\xymatrix{[0,1]\times\calG\ar[r]^-\sim&QM^0_{G\times G,G}(\calZ/\mathbb B,\sigma,\xi)_\bbR.}
\eeq

Recall 
the involution $\eta_\calZ$ on $QM^0_{G\times G,G}(\calZ_\circ/B_\circ,\sigma,\xi)_\bbR$
of Section \ref{eta_Z}. 
Via the isomorphism $p_\zeta$, 
the involution $\eta_\calZ$ 
gives rise to a family of 
involutions 
\beq\label{involution G}
\xymatrix{\alpha_a:\calG\ar[r]&\calG,\ \ \ a\in[0,1],}
\eeq
where $\alpha_a(\gamma):=p_\zeta^{-1}\circ\eta_\calZ\circ p_\zeta(\{a\}\times\gamma)$.
The following proposition follows from
Proposition \ref{Z/2 action}:

\begin{thm}\label{family of involutions}
The family of involutions $\alpha_a:\calG\lra\calG,\ a\in[0,1]$ satisfy the following:
\begin{enumerate}
\item We have $q\circ\alpha_a=q:\calG\to\bbR^m$.
\item $\alpha_a$ is $G_c$-equivariant and preserves the stratification $\{\calG^{\lambda_\frakp}\}$.
\item At $a=0$, we have $\alpha_0(\gamma(z))=\theta(\gamma(z))^{-1}$.
\item At $a=1$, we have $\alpha_1(\gamma(z))=\eta(\gamma(\bar z))$.

\end{enumerate}
\end{thm}
\begin{proof}
Part (1), (2), (3) is clear. Part (4) follows from the fact 
that the automorphism $f_1(z)=\frac{z-i}{z+i}$ satisfies
$f_1(\bar z)=\frac{\bar z-i}{\bar z+i}=(\overline{f_1(z)})^{-1}$.
\end{proof}

\subsection{Trivializations of fixed-points}\label{ss: fixed-points}
Our aim is to trivialize the fixed-point of the family involution $\beta_a$.
To that end, we will invoke the following lemma:

\begin{lemma}\label{fixed points}
Let $I\subset\bbR$ be an interval.
Let $M\to I$ and $N\to I$ 
be two stratified real analytic submersions of  
real analytic Whitney stratified ind-varieties $M$ and $N$ (where $I$ is equipped with the trivial stratification). Let $f:M\to N$ be a Thom map.
\begin{enumerate}
\item
Assume there is 
a compact group $H\times \bZ/2$ acting real analytically on $M$ such that 
the action preserves the stratifications 
and $f$ is 
$H\times\bZ/2$-invariant (where $H\times\bZ/2$ acts trivially on $N$). 
Then the 
$\bZ/2$-fixed-point ind-variety $M^{\bZ/2}$ is Whitney stratified by the 
fixed-points of the strata and the induced map 
$f^{\bZ/2}:M^{\bZ/2}\to N$ is an $H$-equivariant Thom map.
\item
Assume further that 
$f$ is ind-proper and 
there is an $H$-equivariant  stratified trivialization of 
$f:M\to N$ over $I$, 
that is, there are stratified preserving homeomorphisms 
$h_M$ and $h_N$
fitting into a commutative diagram
\[\xymatrix{M\ar[r]^{h_M\ \ \ }\ar[d]^f&I\times M_0\ar[d]^{\on{id}\times f_0}\\
N\ar[r]^{h_N\ \ \ }&I\times N_0
}\]
that are real analytic on each stratum.
Then there is an
$H$-equivariant  stratified trivialization of $f^{\bZ/2}:M^{\bZ/2}\to N$ that is real analytic on each 
stratum.
\end{enumerate}
\end{lemma}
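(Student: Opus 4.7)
\medskip

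\noindent\textbf{Proof plan.}

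For part (1), the plan is to exploit the fact that the $\bZ/2$-action is by real analytic automorphisms preserving the stratification. First I would observe that any stratum $S$ meeting $M^{\bZ/2}$ is necessarily $\bZ/2$-stable: if $p\in S\cap M^{\bZ/2}$ then $\sigma(p)=p$ lies in both $S$ and $\sigma(S)$, forcing $\sigma(S)=S$ since strata are disjoint. For such a $\bZ/2$-stable stratum, the classical Koszul/Cartan linearization for smooth actions of compact groups on real analytic manifolds gives that $S^{\bZ/2}$ is a real analytic submanifold, with $T_pS^{\bZ/2}=(T_pS)^{\bZ/2}$. The Whitney (a) and (b) conditions for $\{S^{\bZ/2}\}$ follow by intersecting the corresponding conditions for $\{S\}$ with $M^{\bZ/2}$, using that tangent spaces and secants of fixed points lie inside the fixed tangent/secant spaces of the ambient strata. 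For the submersion property, given any $a\in\bbR$ and any $p\in (f^{\bZ/2})^{-1}(a)\cap S^{\bZ/2}$, lift $\partial_a$ to a vector $w\in T_pS$ and average: $\tilde w=\tfrac12(w+\sigma_*w)\in (T_pS)^{\bZ/2}=T_pS^{\bZ/2}$ still satisfies $df(\tilde w)=\partial_a$ since $f$ is $\bZ/2$-invariant. The $H$-equivariance passes through because $H$ commutes with $\bZ/2$.

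For part (2), my plan is the standard averaging trick combined with Thom's first isotopy lemma in its equivariant form. The given $H$-equivariant stratified trivialization $\phi:M\is M_0\times\bbR$ is equivalent, via $\phi^{-1}_*\partial_a$, to a controlled stratified vector field $v$ on $M$ lifting $\partial_a$, tangent to each stratum, $H$-equivariant, and real analytic on each stratum. Set $\tilde v=\tfrac12(v+\sigma_*v)$. Because $\sigma$ preserves the stratification, commutes with $H$, and satisfies $f\circ\sigma=f$, the averaged field $\tilde v$ remains a controlled $H$-equivariant stratified lift of $\partial_a$ real analytic on each stratum, and is additionally $\bZ/2$-invariant. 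Thus $\tilde v$ is tangent to $M^{\bZ/2}$ along $M^{\bZ/2}$ and restricts to a stratified vector field $\tilde v^{\bZ/2}$ on $M^{\bZ/2}$ lifting $\partial_a$, tangent to the strata constructed in (1), $H$-equivariant and stratum-wise real analytic. Ind-properness of $f^{\bZ/2}$, inherited from that of $f$, lets us integrate $\tilde v^{\bZ/2}$ by Thom's first isotopy lemma (applied ind-level by ind-level, since $\sigma$ preserves any exhaustion by $\sigma$-stable closed substacks obtained from a given exhaustion by averaging) to obtain the desired $H$-equivariant stratified trivialization of $f^{\bZ/2}$.

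The main obstacle I anticipate is not the averaging itself, which is formal, but two bookkeeping issues. First, one must arrange a $\bZ/2$-stable ind-exhaustion of $M$ so that Thom's first isotopy lemma applies at each finite stage; this can be done by replacing a given exhaustion $M_n$ by $M_n\cup\sigma(M_n)$. Second, the trivialization guaranteed by Thom is only continuous globally and smooth on strata; upgrading smoothness to real analyticity on each stratum requires that the flow of a real analytic vector field on a real analytic manifold is real analytic, which is classical provided one works stratum-by-stratum and the vector field stays tangent to the (real analytic) stratum, as is the case here.
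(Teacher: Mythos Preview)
Your proposal is correct and follows essentially the same approach as the paper: part (1) is what the paper cites from \cite[Lemma 4.5.1]{N2}, and for part (2) the paper likewise takes the horizontal vector field $v$ supplied by the given trivialization, averages it under the $\bZ/2$-action, and uses ind-properness to guarantee completeness so that the integral curves furnish the desired trivialization. One small terminological point: you invoke Thom's first isotopy lemma to \emph{integrate} the averaged field, but that lemma is really about \emph{producing} a controlled lift in the first place; once you already have a complete stratified lift of $\partial_a$ (which is what averaging plus ind-properness gives you), the flow itself is the trivialization, no further appeal to Thom--Mather is needed.
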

\begin{proof}
Part (1) is proved in \cite[Lemma 4.5.1]{N}. 
For part (2), the $H$-equivariant stratified trivialization of $f:M\to N$
provides a horizontal lift 
of the coordinate vector field 
$\partial_t$ on $I$ to 
a continuous  $H$-invariant vector field 
$v$ on $M$ that is tangent to and real analytic  along each stratum.
Let $w$ be the 
average of $v$ with respect to the $\bZ/2\bZ$-action.
As $f$ is ind-proper and the $\bZ/2\bZ$-action is real analytic, the vector field $w$ is complete and the integral curves of 
$w$ define an $H$-equivariant stratified trivialization of $f^{\bZ/2}:M^{\bZ/2}\to N$ over $[0,1]$
that is real analytic along each 
stratum.
\end{proof}

Now let us apply the above lemma to 
the map 

\[\xymatrix{QM_{G\times G,G}(\calZ'/\mathbb B',\sigma,\xi)_\bbR\ar[r]&\mathbb B'
\\
(\text{resp.}\ \ QM^0_{G\times G,G}(\calZ/\mathbb B,\sigma,\xi)_\bbR\ar[r]&\mathbb B)}\]
with the stratifications 
$\{\calS^{\lambda_\frakp}_{\calZ,\bbR,\mathbb B'}\}$ and $\{(\mathbb B')^\fp=(0,1]\times\bbR^\fp\}$
(resp. $\{\calS^{\lambda_\frakp,0}_{\calZ,\bbR,\mathbb B}\}$ and $\{(\mathbb B^\fp=[0,1]\times\bbR^\fp\}$). We will consider the $H\times\bZ/2$-action given by
$K_c\times\langle\eta_\calZ\rangle$.

\begin{proposition}\label{Whitney}
\begin{enumerate}
\item
There is a $K_c$-equivariant topological trivialization of 
the fixed-points of $\eta_\calZ$ of the map \[QM_{G\times G,G}(\calZ'/\mathbb B',\sigma,\xi)_\bbR\lra\mathbb B'\] over $I=(0,1]$. 


\item
There is a $K_c$-equivariant topological trivialization of 
the fixed-points of $\eta_\calZ$ of the map \[QM^0_{G\times G,G}(\calZ/\mathbb B,\sigma,\xi)_\bbR\lra\mathbb B\] over $I=[0,1]$.

\end{enumerate}

\end{proposition}
\begin{proof}
For part (1), by Proposition \ref{trivialization}, there is a $K_c$-equivariant stratified trivialization of 
\beq\label{part 1}
\xymatrix{
\QM_{G\times G,G}(\calZ'/\mathbb B',\sigma,\xi)_\bbR \ar[r] &  \mathbb B'=(0,1]\times\bbR^m
}
\eeq
Applying Lemma~\ref{fixed points} with $H\times\bZ/2=K_c\times\langle\eta_\calZ\rangle$, we obtain
part (1).  

For part (2), by
 Proposition \ref{trivialization},
there is a $K_c$-equivariant stratified trivialization of 
\beq\label{part 2}
\xymatrix{
\QM^0_{G\times G,G}(\calZ'/\mathbb B',\sigma,\xi)_\bbR \ar[r] &  \mathbb B=[0,1]\times\bbR^m
}
\eeq
Following the proof of Lemma \ref{fixed points}, consider the averaged vector field $w$ 
with respect to the $\bZ/2\bZ$-action given by 
$\langle\eta_\calZ\rangle$.
We claim that $w$ is complete, hence 
the integral curves of 
$w$ provide the desired trivialization. 

To prove the claim, observe that, over the open locus $\mathbb B'\subset\mathbb B$
in the base,
 the 
$K_c$-equivariant trivialization in \eqref{part 2} 
extends to a $K_c$-equivariant trivialization of the ind-proper family 
in \eqref{part 1}.

Thus for any $b=(a,z_1,...,z_m)\in\mathbb B'$,
any integral curve $p(t)$ for $w$ with initial point 
 $p(a)\in \QM^0_{G\times G,G}(\calZ'/\mathbb B',\sigma,\xi,b)_\bbR$ exists for $t\geq a$.
Together with the local existence of integral curves with initial point in the special fiber 
$\QM^0_{G\times G,G}(\calZ'/\mathbb B',\sigma,\xi,b)_\bbR$, $b=(0,z_1,...,z_m)$, this 
implies $p(t)$ exists for all $t \in [0,1]$. Hence $w$ is complete 
and we have proved the claim.
\end{proof}

\subsection{Real and symmetric 
spherical strata}
Let us summarize here the results obtained by the preceding considerations. 

Recall we write $\calG\is\Gr^{(m),0}\times_{(\bP^1)^m}\bbR^m\ra\bbR^m$ for the group ind-scheme
over $\bbR^m$ whose fiber over 
$(z_1,...,z_m)\in\bbR^m$ is the group 
of maps $\gamma: \bbP^1\setminus \{z_1,...,z_m\} \to G$ such that
$\gamma(\infty) = e$.
Recall the transported spherical strata
$\calG^{\lambda_\frakp}=\calG\cap\calS^{\lambda_\frakp}$.

The isomorphism $p_\zeta:[0,1]\times\calG\is\QM^0_{G\times G,G}(\calZ'/\mathbb B',\sigma,\xi)_\bbR$
in
\eqref{p_zeta} together with
Proposition \ref{family of involutions} and \ref{Whitney} immediately imply:

\begin{thm}\label{main thm for Gr}

There is a $K_c$-equivariant stratified homeomorphism 
between the fixed-points of $\eta$ and $\on{inv}\circ\theta$
on $\calG$ compatibile with projections to $\bbR^m$.
The homeomorphism restricts to a $K_c$-equivariant real analytic isomorphism between
the fixed-points of $\eta$ and $\on{inv}\circ\theta$
on $\calG^{\lambda_{\frakp}}$.
\end{thm}

Observe that the fixed-points $\calG^\eta $ coincide with the group ind-scheme
$\calG_\bbR\ra\bbR^m$ of a point $(z_1,...,z_m)\in\bbR^m$ and a
map $\gamma: \bbP^1\setminus \{z_1,...,z_m\} \to G$, such that $\gamma(\bbP^1(\bbR) \setminus \{z_1,...,z_m\}) \subset G_\bbR$ and 
$\gamma(\infty) = e$.
Denote by $\calG^{\lambda_\frakp}_{\bbR}=\calG^{\lambda_\frakp}\cap\calG_\bbR$ its spherical strata. 
Similarly,  observe that the fixed-points  $(\calG)^{\on{inv}\circ \theta}$ coincides with the space $\calX\ra\bbR^m$ of a point $(z_1,...,z_m)\in\bbR^m$ and 
a map 
$\gamma: \bbP^1\setminus \{z_1,...,z_m\} \to X \subset G$ such that 
$\gamma(\infty) = e$.
Denote by $\calX^{\lambda_\frakp}=\calG^{\lambda_\frakp}\cap\calX$ its spherical strata.

We can restate the above theorem in the form:


\begin{thm}\label{G=X}
There is a $K_c$-equivariant stratified homeomorphism 
\begin{equation}\label{eq: homeo gr}
\xymatrix{
\calG_\bbR\ar[r]^-\sim& \calX
}
\end{equation}
fitting into the diagram 
\[\xymatrix{\calG_\bbR\ar[r]^-\sim\ar[d]&\calX\ar[d]
\\\bbR^m\ar[r]^{\on{id}}&\bbR^m}\]
that restricts to real analytic isomorphisms on strata 
\begin{equation}\label{eq: homeo gr strat}
\xymatrix{
\calG^{\lambda_\frakp}_{\bbR} \ar[r]^-\sim& \calX^{\lambda_\frakp}
}
\end{equation}
where $\lambda_\frakp:\frakp\to \mL^+$ 
(for the subset $\mL^+\subset\Lambda_A^+$ defined in Section 2.1) with $|\lambda_\frakp| \in R_G^+$.
\end{thm}

\begin{remark}
Since the stratum
$\calG^{\lambda_\frakp}_{\bbR}=S_\bbR^{\lambda_\frakp}\cap\calG_\bbR$ is the 
intersection of the real spherical stratum $S_\bbR^{\lambda_\frakp}$
in the real 
Beilinson-Drinfeld grassmannian $\Gr_\bbR^{(m)}$
with 
 the open cospherical stratum $\calG_\bbR$, the above theorem implies that
 the singularities of the closures of the real spherical strata $\overline {S_\bbR^{\lambda_\frakp}}$
 are \emph{locally}  homeomorphic to 
 complex algebraic varieties.
 However, one should note that 
the closures $\overline {S_\bbR^{\lambda_\frakp}}$ are not  in general 
\emph{globally}
homeomorphic to
complex projective varieties. For example, for the rank one group $G_\bbR = \SL_2(\bbH)$, and  the generator $\lambda\in \Lambda^+_A$ (i.e., $m=1$ and
$\lambda=\lambda^\fp:\fp=\{1\}\to\Lambda^+_A$), one finds that $\overline{S^{\lambda}_\bbR} \subset \Gr_\bbR$ is  the one-point compactification of the cotangent bundle $T^* \bbH\bbP^1$ of the quaternionic projective line. Thus its intersection cohomology Poincar\'e polynomial is $1 + t^4 + t^8$ so does not satisfy the Hard Lefschetz Theorem.

\end{remark}

\quash{
\begin{example}
The base change 
of \eqref{eq: homeo gr} to the base point $(0,,...,0)\in\bbR^m$
gives rise to a 
$K_c$-equivariant stratified homeomorphism 
\[T^0_\bbR\is X[\bP^1\setminus\{0\}]_{\infty\to e}\]
where $G_\bbR[\bP^1(\bbR)\setminus\{0\}]_{\infty\to e}$
is the real ind-variety of maps $\gamma:\bP^1\setminus\{0\}\to G$
such that $\gamma(\bP^1(\bbR)\setminus\{0\})\subset G_\bbR$ and $\gamma(\infty)=e$,
and $X[\bP^1\setminus\{0\}]_{\infty\to e}$ is complex ind-variety of 
maps $\gamma:\bP^1\setminus\{0\}\to X$ such that $\gamma(\infty)=e$.

Then the real arc group $G_\bbR[\bP^1(\bbR)\setminus\{0\}]_{\infty\to e}$
is the open cell $T_\bbR^0\subset\Gr_\bbR$ the intersection 
$T_\bbR^0\cap \overline S^\lambda_\bbR$

\end{example}

}


\subsection{Kostant-Sekiguchi homeomorphism for $\on{GL}_n$}
Here we explain how the prior construction recover 
Theorem 1.1 and Theorem 1.2 in  \cite{CN2} but by a completely different argument.\footnote{The argument in \cite{CN2} used quiver varieties and hyperk\"ahler rotations.}

Consider the case when $G=\on{GL}_n(\bbC)$ with real form $G_\bbR=\on{GL}_n(\bbR)$.
We  have  
$K=\on{O}_n(\bbC)$
and $K_c=\on{O}_n(\bbR)$ the 
complex and real orthogonal groups respectively.
The conjugation $\eta$ and involution $\theta$ on $\on{GL}_n(\bbC)$
are given by $\eta(M)=\overline M$ and $\theta(M)=(M^t)^{-1}$.
Let $\fg_n=\frak{gl}_n(\bbC)$ be the Lie algebra of $G=\on{GL}_n(\bbC)$, $\ft_n \subset \fg_n$  the subspace of diagonal matrices, and $\fc_n=\ft_n\gitquot\on{S}_n$  the quotient
of $\ft_n$ by the symmetric group $\on{S}_n$ on $n$ letters.
Let $\chi:\fg_n\to \fg_n\gitquot\on{GL}_n(\bbC)\is\fc_n$  be the Chevalley map.

As observed by G.~Lusztig and B.C.~Ngo,
for any $(M,z_1,...,z_n)\in\fg_n\times_{\fc_n}\ft_n(\bbR)$,
 the formula 
$\gamma:=\on{id}-z^{-1}M$ defines a map from 
$\gamma:\bP^1\setminus\{z_1,..,z_n\}\to\GL_n(\bbC)$
satisfying $\gamma(\infty)=\on{id}$.
Indeed, the collection $\{z_1,..,z_n\}$ is the set of eigenvalues of $M$
and hence the matrix $\on{id}-z^{-1}M$ is invertible for $z\in\bP^1\setminus\{z_1,..,z_n\}$.
Thus we have  an embedding
\[\fg_n\times_{\fc_n}\ft_n(\bbR)\lra\mG\is\Gr^{(n),0}\times_{\bP^1}\mathbb R^n\]
given by
\[(M,z_1,...,z_n)\to (1-z^{-1}M,z_1,...,z_n)\]
compatible with the natural projection maps
to $\ft_n(\bbR)\is\mathbb R^n$.
Moreover, by  \cite[Lemma 2.3.1]{Ngo}, the image of $\fg_n\times_{\fc_n}\ft_n(\bbR)$ in $\calG$
is a union of strata  and the restriction of those strata
to the fibers of the projection $\fg_n\times_{\fc_n}\ft_n(\bbR)\to\ft_n(\bbR)$
are unions of orbits under the natural adjoint action
of $\GL_n(\bbC)$
on $\fg_n\times_{\fc_n}\ft_n(\bbR)$.
Thus the family of involutions in Theorem \ref{family of involutions}
restricts to a 
family of involutions
\beq\label{involutions of g_n}
\alpha_a:\fg_n\times_{\fc_n}\ft_n(\bbR)\to \fg_n\times_{\fc_n}\ft_n(\bbR),\ \ \ a\in[0,1]
\eeq
satisfying the following properties:

\begin{thm}\label{KS involution}
The family of involutions 
$\alpha_a:\fg_n\times_{\fc_n}\ft_n(\bbR)\to \fg_n\times_{\fc_n}\ft_n(\bbR)$, $a\in[0,1]$ satisfy the following:
\begin{enumerate}
\item We have $\on{pr}\circ\alpha_a=\on{pr}:\fg_n\times_{\fc_n}\ft_n(\bbR)\to\ft_n(\bbR)$, for all $a\in [0,1]$.
\item $\alpha_a$ is $\mathrm O_n(\bbR)$-equivariant and takes a $\GL_n(\bC)$-orbit 
real analytically to a $\GL_n(\bC)$-orbit.
\item At $a=0$, we have $\alpha_0(M,z_1,...z_n)=(\overline M,z_1,...,z_n)$.
\item At $a=\infty$, we have $\alpha_\infty(M,z_1,...z_n)=(M^t,z_1,...z_n)$.
\end{enumerate}
\end{thm}
\begin{proof}
Only part (3) and (4) require proof, 
and they follow from the following identities:
for $\gamma(z)=\on{id}-z^{-1}M$ we have 
$\alpha_0(\gamma(z))=\overline{(\gamma(\bar z))}=
\overline{(\on{id}-{\overline z}^{-1}M)}=\on{id}-z^{-1}\overline M$
and $\alpha_1(\gamma(z))=\theta(\gamma(z))^{-1}=
(\on{id}-z^{-1} M)^{t}=
\on{id}-z^{-1}M^t$.
\end{proof}

Let $\fg_n(\bbR)$ be the space of real $n\times n$-matrices 
and let $\fg_{n}^{\on{sym}}$ be the space of $n\times n$ complex symmetric matrices.
 Theorem \ref{G=X} implies the following result which can be viewed as a 
lift of the well-known Kostant-Sekiguchi bijection
between real and symmetric nilpotent orbits to a stratified homeomorphism:

\begin{thm}\label{KS homeo}
There is an $\on{O}_n(\bbR)$-equivariant  homeomorphism 
\begin{equation}\label{eq: homeo gr}
\xymatrix{
\fg_n(\bbR)\times_{\fc_n}\ft_n(\bbR)\ar[r]^-\sim& \fg_n^{\on{sym}}\times_{\fc_n}\ft_n(\bbR)
}
\end{equation}
fitting into the diagram 
\[\xymatrix{\fg_n(\bbR)\times_{\fc_n}\ft_n(\bbR)\ar[r]^-\sim\ar[d]&\fg_n^{\on{sym}}\times_{\fc_n}\ft_n(\bbR)\ar[d]
\\\ft_n(\bbR)\ar[r]^{\on{id}}&\ft_n(\bbR)}\]
that restricts to real analytic isomorphisms 
between $\on{GL}_n(\bbR)$ and $\on{O}_n(\bbC)$
adjoint orbits on $\fg_n(\bbR)\times_{\fc_n}\ft_n(\bbR)$
and $\fg_n^{\on{sym}}\times_{\fc_n}\ft_n(\bbR)$.

\end{thm}

\begin{remark}
It follows from the construction that 
the family of involutions 
in~\eqref{involution G} 
are in fact equivariant under the natural $\on{S}_n$-action 
on $\calG\is\Gr^{(m),0}\times_{(\bP^1)^m}\bbR^m$.
Thus the involutions 
in~\eqref{involutions of g_n} are also equivariant under the natural $\on{S}_n$-action 
on $\fg_n\times_{\fc_n}\ft_n(\bbR)$. Hence Theorem \ref{KS involution} and Theorem \ref{KS homeo} have a direct analogy 
for the quotient $\fg_n\times_{\fc_n}\ft_n(\bbR)\gitquot\on{S}_n$.

On the other hand, the quotient $\fg_n\times_{\fc_n}\ft_n(\bbR)\gitquot\on{S}_n$
is isomorphic to the  subset $\fg'_n\subset\fg_n$ consisting of 
matrices with real eigenvalues. Thus in this way we recover 
Theorem 1.1 and Theorem 1.2 in  \cite{CN2}.  but by a completely different argument.

\end{remark}


{}

\begin{thebibliography}{99999}


\bibitem[CN1]{CN1} T.H. Chen, D. Nadler. Affine Matsuki correspondence for sheaves,
preprint, 
arXiv:1805.06514.

\bibitem[CN2]{CN2} T.H. Chen, D. Nadler. Real and symmetric matrices, to appear in Duke~Math.~J.,
arXiv:2006.10279.

\bibitem[N]{N} 
 D. Nadler. Perverse Sheaves on Real Loop Grassmannians, Invent. Math. 159 (2005), no. 1, 1-73.
 
\bibitem[Ngo]{Ngo} 
B.C. Ngo. Faisceaux pervers, homomorphisme de changement de base et lemme fondamental de Jacquet et Ye, Ann. Sci. cole Norm. Sup. 32 (1999) 619-679

\quash{
\bibitem[N]{N} D. Nadler. Matsuki correspondence for the affine Grassmannian, Duke Math. J. 124 (2004), no. 3, 421-457.

\bibitem[AH]{AH} P. Achar, A. Henderson.
Geometric Satake, Springer correspondence, and small representations,
Sel. Math. New Ser. (2013) 19: 949-986.


\bibitem[B]{Beil} R. Bielawski: Lie groups, Nahm's equations and hyper-K\"ahler manifolds, in: Tschinkel, Yuri (ed.), Algebraic groups. Proceedings of the summer school, G\"ottingen, June 27-July 13, 2005. G\"ottingen: Universit\"atsverlag G\"ottingen. Universit\"atsdrucke G\"ottingen. Seminare Mathematisches Institut, 1--17 (2007).


\bibitem[Bi]{Biq} O. Biquard,
Extended correspondence of Kostant-Sekiguchi-Vergne, preprint.

\bibitem[CN]{CN} T.H. Chen, D. Nadler. Affine Matsuki correspondence for sheaves,
preprint.


\bibitem[K]{K} P. B. Kronheimer. Instantons and the geometry of the nilpotent variety,
Differ. Geom. 32 (1990), p. 473?490.




 
\bibitem[OV]{OV} 
A.L. Onishchik, E.B. Vinberg (Eds.)
Lie Groups and Lie Algebras III:
Structure of Lie Groups and Lie Algebras,
Encyclopaedia of Mathematical Sciences,
Springer, 1994.


 
\bibitem[S]{S} J. Sekiguchi. Remarks on nilpotent orbits of a symmetric pair, 
J. Math. Soc. Japan
Volume 39, Number 1 (1987), 127-138.

\bibitem[SV]{SV} K. Vilonen, W. Schmid. On the geometry of nilpotent orbits, Asian J. Math. 
Vol 3, (1999), 233-274.

\bibitem[V]{V} M. Vergne. Instantons et correspondance de Kostant-Sekiguchi, C.R. Acad. Sci. Paris., 320
(1995), 901-906. 
}

\end{thebibliography}
\end{document}